\newtheorem{thm}{Theorem}[section]
\newtheorem{cor}[thm]{Corollary}
\newtheorem{lem}[thm]{Lemma}
\newtheorem{prop}[thm]{Proposition}
\newtheorem{exam}[thm]{Example}
\numberwithin{equation}{section}
\begin{document}

\title{On Yaqub nil-clean rings}

\author{Huanyin Chen}
\author{Marjan Sheibani Abdolyousefi}
\address{
Department of Mathematics\\ Hangzhou Normal University\\ Hang -zhou, China}
\email{<huanyinchen@aliyun.com>}
\address{
Farzanegan Semnan University\\ Semnan, Iran}
\email{<sheibani@fgusem.ac.ir>}

\subjclass[2010]{16U99, 16E50, 13B99.} \keywords{Nilpotent; tripotent; exchange ring; Yaqub nil-clean ring}

\begin{abstract}
A ring $R$ is a Yaqub nil-clean if $a+a^3$ or $a-a^3$ is nilpotent for all $a\in R$. We prove that a ring $R$ is a Yaqub nil-clean ring if and only if $R\cong R_1, R_2,R_3,R_1\times R_2$ or $R_1\times R_3$, where
$R_1/J(R_1)$ is Boolean, $R_2/J(R_2)$ is a Yaqub ring, $R_3/J(R_3)\cong {\Bbb Z}_5$ and each $J(R_i)$ is nil, if and only if $J(R)$ is nil and
$R/J(R)$ is isomorphic to a Boolean ring $R_1$, a Yaqub ring $R_2$, ${\Bbb Z}_5$, $R_1\times R_2$, or $R_1\times {\Bbb Z}_5$, if and only if for any $a\in R$, there exists $e^3=e$ such that $a-e$ or $a+3e$ is nilpotent and $ae=ea$, if and only if $R$ is an exchange Hirano ring. The structure of such rings is
thereby completely determined.
\end{abstract}

\maketitle

\section{Introduction}
Throughout, all rings are associative with an identity. A ring $R$ is strongly nil-clean if $a-a^2$ is nilpotent for all $a\in R$.
A ring $R$ is strongly weakly nil-clean if $a+a^2$ or $a-a^2$ is nilpotent for all $a\in R$.
Strongly (weakly) nil-clean rings are studied by many authors, e.g., ~\cite{BDZ, BGD}, \cite{CS2}, \cite{Da1,D} and \cite{KWZ, KWZ2}. An element $a$ in a ring is tripotent if $a^3=a$. A ring is strongly 2-nil-clean if $a-a^3$ is nilpotent for all $a\in R$ (see~\cite{CS}).
It is proved that a ring $R$ is strongly 2-nil-clean if for any $a\in R$ there exists a tripotent $e\in R$ such that $a-e\in R$ is nilpotent and
$ae=ea$ (see~\cite[Theorem 2.8]{CS}).

We call a ring $R$ is Yaqub nil-clean if $a+a^3$ or $a-a^3$ is nilpotent for all $a\in R$. Clearly, strongly weakly nil-clean and strongly 2-nil-clean rings are Yaqub nil-clean, but the converse is not true, e.g., ${\Bbb Z}_5$. The motivation is to determine the structure of such rings.

A ring $R$ is a Yaqub ring provided that it is the subdirect product of ${\Bbb Z}_3$'s. (see~\cite{CS}).
We prove that a ring $R$ is a Yaqub nil-clean ring if and only if $R\cong R_1, R_2,R_3,R_1\times R_2$ or $R_1\times R_3$, where
$R_1/J(R_1)$ is Boolean, $R_2/J(R_2)$ is a Yaqub ring, $R_3/J(R_3)\cong {\Bbb Z}_5$ and each $J(R_i)$ is nil, if and only if $J(R)$ is nil and
$R/J(R)$ is isomorphic to a Boolean ring $R_1$, a Yaqub ring $R_2$, ${\Bbb Z}_5$, $R_1\times R_2$, or $R_1\times {\Bbb Z}_5$, if and only if for any $a\in R$, there exists $e^3=e$ such that $a-e$ or $a+3e$ is nilpotent and $ae=ea$.

An element $a$ in a ring $R$ is (strongly) clean provided that it is the sum of an idempotent and a unit (that commute). A ring $R$ is (strongly) clean in case every element in $R$ is (strongly) clean. A ring $R$ is an exchange ring provided that for any $a\in R$, there
exists an idempotent $e\in R$ such that $e\in aR$ and $1-e\in (1-a)R$. Every (strongly) clean ring is an exchange ring, but the converse is not true (see~\cite[Proposition 1.8]{N}). A ring $R$ is called a Hirano ring if for any $u\in U(R)$, $1\pm u^2$ is nilpotent. Furthermore, we prove that a ring $R$ is Yaqub nil-clean if and only if $R$ is an exchange Hirano ring. The structure of such rings is thereby completely determined.

We use $N(R)$ to denote the set of all nilpotents in $R$ and $J(R)$ the Jacobson radical of $R$. ${\Bbb N}$ stands for the set of all natural numbers. $a\pm b$ means that $a+b$ or $a-b$. ${\Bbb Z}[u]=\{ f(u)~|~f(t)$ is a polynomial with integral coefficients $\}$.

\section{Elementary Characterizations}

The aim of this section is to investigate elementary characterizations of Yaqub nil-clean rings. We begin with

\begin{lem} Let $R$ be a ring. Then the following are equivalent:
\end{lem}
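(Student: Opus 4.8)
The plan is to establish the listed conditions equivalent by a short cycle of implications whose spine is reduction modulo the Jacobson radical. The first thing I would note is that being Yaqub nil-clean is inherited by every homomorphic image for free: a surjection $\varphi\colon R\to S$ sends $a\pm a^3$ to $\varphi(a)\pm\varphi(a)^3$ and sends nilpotents to nilpotents, so if $a+a^3$ or $a-a^3$ is nilpotent in $R$ then the corresponding element is nilpotent in $S$. In particular $R/J(R)$ is Yaqub nil-clean whenever $R$ is.

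The one genuinely substantive micro-step is that $J(R)$ is forced to be nil. Given $x\in J(R)$, the elements $1-x^2$ and $1+x^2$ are units of $R$ because $x^2\in J(R)$. Since $R$ is Yaqub nil-clean, $x(1+x^2)=x+x^3$ or $x(1-x^2)=x-x^3$ is nilpotent; as $x$ commutes with the relevant unit $u=1\pm x^2$ and a commuting-unit multiple of a nilpotent is again nilpotent, multiplying by $u^{-1}$ gives $x\in N(R)$. Hence $J(R)\subseteq N(R)$, i.e.\ $J(R)$ is nil. Together with the previous paragraph this proves the implication from ``$R$ is Yaqub nil-clean'' to ``$J(R)$ is nil and $R/J(R)$ is Yaqub nil-clean'' (and, a fortiori, to the weaker statement that $R/I$ is Yaqub nil-clean for some nil ideal $I$, taking $I=J(R)$).

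For the return implications I would use the elementary fact that a nil ideal absorbs nilpotence testing: if $I\trianglelefteq R$ is nil and $b^{k}\in I$ for some $k$, then $b\in N(R)$. So, assuming only that $R/I$ is Yaqub nil-clean for some nil ideal $I$, fix $a\in R$; in $R/I$ one of $\overline{a}+\overline{a}^{3}$, $\overline{a}-\overline{a}^{3}$ is nilpotent, meaning a power of $a+a^3$ or of $a-a^3$ lies in $I$, whence that element is nilpotent in $R$. This closes the cycle. If the statement additionally lists an element-wise form — a tripotent $e$ with $a-e$ or $a+3e$ nilpotent and $ea=ae$ — the extra work is to manufacture the tripotent by lifting a tripotent decomposition from $R/J(R)$ (tripotents lift modulo nil ideals, e.g.\ by first lifting the idempotent $e^2$ and then the symmetry in the corner $e^2Re^2$) and then to adjust it so that it commutes with $a$ inside the commutative subring generated by $a$ and the nilpotent error.

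The main obstacle I expect is precisely that last adjustment in the tripotent form, together with the bookkeeping of which alternative ($a-e$ versus $a+3e$) one lands in and why the coefficient $3$ appears at all. That coefficient is an artifact of ${\Bbb Z}_5$-type behaviour — where $2\in U(R/J(R))$ and $-2\equiv 3$ — and making the sign globally consistent ultimately leans on the structural fact, exploited throughout the rest of the paper, that a Yaqub nil-clean ring admits no homomorphic image isomorphic to ${\Bbb Z}_3\times{\Bbb Z}_5$ or ${\Bbb Z}_5\times{\Bbb Z}_5$. For the present lemma, however, the reduction to $R/J(R)$ carries essentially all of the weight, and the proof should be short.
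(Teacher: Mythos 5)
You have proved the wrong statement. The lemma in question is the opening lemma of Section 2, whose two conditions are: (1) $R$ is Yaqub nil-clean, and (2) for every $a\in R$ the element $a^2$ is strongly weakly nil-clean (i.e.\ $a^2-(a^2)^2$ or $a^2+(a^2)^2$ is nilpotent). Your proposal never engages with condition (2) at all; instead it establishes ``$R$ is Yaqub nil-clean if and only if $J(R)$ is nil and $R/J(R)$ is Yaqub nil-clean,'' together with speculation about a tripotent decomposition $a-e$ or $a+3e$. That is the content of other results in the paper (Theorem 2.2, Lemma 2.7 and Theorem 3.10), not of this lemma, so as a proof of the present statement it has a fatal gap: the equivalence with the strongly-weakly-nil-clean condition on squares is simply not addressed. (The steps you do write are individually fine: the unit argument showing $x\in J(R)$ is nilpotent since $x\pm x^3=x(1\pm x^2)$ with $1\pm x^2\in U(R)$ commuting with $x$, and the lifting of nilpotency modulo a nil ideal, are both correct; they are just aimed at a different target.)

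For comparison, the paper's proof of the actual lemma is a two-line computation with no radical-theoretic input. Forward: if $a\pm a^3\in N(R)$, multiply by $a$ to get $a^2\pm a^4\in N(R)$, i.e.\ $a^2-(a^2)^2$ or $-a^2-(-a^2)^2$ is nilpotent, so $a^2$ is strongly weakly nil-clean. Backward: if $a^2-a^4$ or $a^2+a^4$ is nilpotent, write it as $a(a\mp a^3)$; since everything lies in the commutative subring generated by $a$, one gets $(a\mp a^3)^2=a(a\mp a^3)(1\mp a^2)\in N(R)$, hence $a\mp a^3\in N(R)$. If you want to repair your submission, this elementary commuting-element manipulation is all that is needed; the Jacobson radical plays no role here.
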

\begin{enumerate}
\item [(1)] {\it $R$ is Yaqub nil-clean.}
\vspace{-.5mm}
\item [(2)] {\it For any $a\in R$, $a^2\in R$ is strongly weakly nil-clean.}
\end{enumerate}
\begin{proof} $\Longrightarrow$ Let $a\in R$. Then $a\pm a^3\in N(R)$, and so $a^2-a^4$ or $a^2+a^4\in N(R)$. Thus, $a^2-a^4$ or $-a^2-(-a^2)^2$ is nilpotent. That is, $a^2\in R$ is weakly nil-clean.

$\Longleftarrow$ Suppose that $a^2$ is strongly weakly nil-clean. Then $a^2-a^4\in N(R)$ or $-a^2-(-a^2)^2\in N(R)$. This implies that
$a(a-a^3)$ or $a(a+a^3)$ is nilpotent; hence, $(a-a^3)^2$ or $(a+a^3)^2$ is nilpotent. Therefore $a\pm a^3\in N(R)$, as desired.\end{proof}

\begin{thm} Let $R$ be a ring. Then $R$ is Yaqub nil-clean if and only if
\end{thm}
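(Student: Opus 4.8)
The plan is to prove both implications by passing to $\bar R:=R/J(R)$, and to this end I first note that $J(R)$ is nil: if $x\in J(R)$ then $1-x^{2}$ and $1+x^{2}$ are units, and as one of $x-x^{3}=x(1-x^{2})$, $x+x^{3}=x(1+x^{2})$ is nilpotent, $x$ itself is nilpotent. Hence $J(R)$ is nil, idempotents and finite orthogonal families of idempotents lift modulo $J(R)$, and the sufficiency direction becomes routine: if $\bar R$ has one of the listed forms it is reduced and one checks directly that it is Yaqub nil-clean (in a Boolean ring, a Yaqub ring, or a product of the two one has $x^{3}=x$; in $\mathbb Z_{5}$ one verifies $z\pm z^{3}=0$ case by case; in $B\times\mathbb Z_{5}$ with $B$ Boolean the crucial point is that $2=0$ on $B$, so the sign dictated by the $\mathbb Z_{5}$-coordinate annihilates the $B$-coordinate as well), and then for every $a\in R$ one of $a\pm a^{3}$ lies in $J(R)\subseteq N(R)$.

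For the converse I would determine the primitive images of the semiprimitive ring $\bar R$, which remains Yaqub nil-clean. If $S$ is primitive and Yaqub nil-clean then $S$ is dense in $\operatorname{End}(V_{D})$ for a division ring $D$; if $\dim_{D}V\ge 2$ a corner $fSf\cong M_{2}(D)$ would be Yaqub nil-clean, but it is not --- for $|D|\ge 4$ the element $\operatorname{diag}(1,c)$ with $c^{2}\neq 1$ has neither $a+a^{3}$ nor $a-a^{3}$ nilpotent, and for $D=\mathbb Z_{2},\mathbb Z_{3}$ a unit of order $3$, respectively $8$, does the same. So $S$ is a division ring with $a^{4}=1$ for every $a\neq 0$, and Jacobson's commutativity theorem together with a finiteness count force $S\cong\mathbb Z_{2},\mathbb Z_{3}$ or $\mathbb Z_{5}$. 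Hence $\bar R$ is a subdirect product of copies of $\mathbb Z_{2},\mathbb Z_{3},\mathbb Z_{5}$: it is commutative, reduced, satisfies $x^{5}=x$, and is von Neumann regular.

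It remains to see which such $\bar R$ are genuinely Yaqub nil-clean. If $\bar R$ has no $\mathbb Z_{5}$-quotient then $x^{3}=x$ identically, so $6=0$ and the Chinese Remainder Theorem gives $\bar R\cong\bar R/2\bar R\times\bar R/3\bar R$, a Boolean ring times a Yaqub ring. If $\bar R$ has a $\mathbb Z_{5}$-quotient, choose $\bar a$ mapping to $2$ there; then $\bar a+\bar a^{3}=0$ is forced throughout $\bar R$, and $g:=\bar a^{4}$ and $g':=16\,\bar a^{4}$ are idempotents. Since a pair of quotient maps of $\bar R$ onto $\mathbb Z_{3}$ and $\mathbb Z_{5}$ is jointly surjective onto $\mathbb Z_{3}\times\mathbb Z_{5}$, and two distinct $\mathbb Z_{5}$-quotients are jointly surjective onto $\mathbb Z_{5}\times\mathbb Z_{5}$, an element mapping to $(2,2)$, respectively $(2,4)$, would fail to be Yaqub nil-clean; hence $\bar R$ has no $\mathbb Z_{3}$-quotient and exactly one $\mathbb Z_{5}$-quotient. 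Projection to that quotient is then injective on $g'\bar R$ and carries $g'$ to $1$, so $g'\bar R\cong\mathbb Z_{5}$, while $(1-g')\bar R$ is Boolean; thus $\bar R\cong B\times\mathbb Z_{5}$ or $\bar R\cong\mathbb Z_{5}$. Lifting $3\cdot 1$, respectively $g'$, through the nil radical $J(R)$ finally refines this to the product decomposition of $R$ itself.

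The main obstacle I anticipate is the classification of primitive Yaqub nil-clean rings: excluding $M_{2}(D)$ over \emph{every} division ring $D$ is not a single computation, since the diagonal trick breaks down for $D=\mathbb Z_{2},\mathbb Z_{3}$ and bad units must be produced by hand; and one must also take care that the idempotents used to split $\bar R$ --- namely $\bar a^{4}$, $16\,\bar a^{4}$, and $3\cdot 1$ --- are polynomials in a single element, so that the decomposition of $R/J(R)$ does not covertly rely on external structure theory for von Neumann regular rings.
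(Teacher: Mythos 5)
You prove a good deal more than this statement asks. The theorem in question is the paper's Theorem 2.2, whose two conditions are that $J(R)$ is nil and that $R/J(R)$ satisfies the identity $x^3=\pm x$; the paper's own proof is short: nilness of $J(R)$ exactly as you argue, then $a-a^5=(a\pm a^3)+(\mp a^3-a^5)\in N(R)$ and an appeal to Zhou's result [Z, Theorem 2.11] to get that $R/J(R)$ satisfies $x^5=x$ and is commutative, whence $N(R)\subseteq J(R)$ and $\overline{a}^3=\pm\overline{a}$; the converse is the one-liner $a^3\mp a\in J(R)\subseteq N(R)$. What you set out to prove is the structural refinement (the paper's Theorems 3.1/3.6: $R/J(R)$ is Boolean, Yaqub, $\mathbb{Z}_5$, Boolean$\times$Yaqub or Boolean$\times\mathbb{Z}_5$), which does imply the identity $x^3=\pm x$ --- your remark that $2=0$ on the Boolean coordinate is exactly the needed check --- so, together with the trivial converse, your argument subsumes the statement. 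The route is genuinely different: the paper outsources the identity to [Z] and, for the structure theorem, splits $R$ along $30\in N(R)$ into $R/2^nR\times R/3^nR$ or $R/2^nR\times R/5^nR$ and quotes the strongly (2-)nil-clean literature ([KWZ], [CS]), handling the $\mathbb{Z}_5$ factor via periodicity, cleanness and [Z, Lemma 2.7]; you classify the primitive quotients of $R/J(R)$ directly (division rings with $a^4=1$, hence $\mathbb{Z}_2,\mathbb{Z}_3,\mathbb{Z}_5$ by Jacobson) and reassemble with CRT and the idempotent $16\overline{a}^4$. Your way is self-contained but heavier; the paper's proof of this particular theorem is much shorter because the hard work is cited.

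One step fails as literally written: for a dense subring $S\subseteq\operatorname{End}(V_D)$ with $\dim_DV\ge 2$, density alone does not produce an idempotent $f$ with $fSf\cong M_2(D)$ (a primitive domain such as the Weyl algebra is dense on an infinite-dimensional space and has no nontrivial idempotents). Replace the corner by the standard consequence of density: for a $2$-dimensional subspace $W$, the unital subring $T=\{s\in S: sW\subseteq W\}$ maps onto $\operatorname{End}(W_D)\cong M_2(D)$, and since Yaqub nil-cleanness passes to subrings (the paper's Lemma 2.3) and to homomorphic images, your $M_2(D)$ computations --- which are correct, including the special units for $D=\mathbb{Z}_2,\mathbb{Z}_3$ --- still give the contradiction. (The paper sidesteps this by first proving the relevant factor periodic, hence clean, and only then invoking [Z, Lemma 2.7] to get an honest corner.) The rest checks out: the exclusions via elements mapping to $(2,2)$ and $(2,4)$, the idempotency of $16\overline{a}^4$ (using $30=0$ in $R/J(R)$), and the splitting into $(1-g')\overline{R}\times g'\overline{R}\cong B\times\mathbb{Z}_5$; for the decomposition of $R$ itself it is cleaner to apply CRT in $R$ with $2^n3^n=0$ or $2^n5^n=0$, as in Theorem 3.1, than to lift idempotents through $J(R)$.
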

\begin{enumerate}
\item [(1)] {\it $J(R)$ is nil;}
\vspace{-.5mm}
\item [(2)] {\it $R/J(R)$ has the identity $x^3=\pm x$.}
\end{enumerate}
\begin{proof} $\Longrightarrow$ Let $x\in J(R)$. Then $x\pm x^3\in N(R)$; hence, $x\in N(R)$. This shows that $J(R)$ is nil.

Let $a\in R$. Then $a\pm a^3\in N(R)$, and so $a^3\pm a^5\in N(R)$.
Thus, $a-a^5=(a\pm a^3)+(\mp a^3-a^5)\in N(R)$. In light of~\cite[Theorem 2.11]{Z}, $R/J(R)$ has the identity $x^5=x$; hence, it is commutative. We infer that
$N(R)\subseteq J(R)$. This shows that
$\overline{a^3}=\overline{\pm a}$ in $R/J(R)$.

$\Longleftarrow $ Let $a\in R$. Then $a^3\pm a\in J(R)\subseteq N(R)$, as required.\end{proof}

\begin{lem} Every subring of Yaqub nil-clean rings is Yaqub nil-clean.
\end{lem}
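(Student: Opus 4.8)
The plan is to rely on the elementary fact that nilpotence is an \emph{absolute} notion: if $S$ is a subring of $R$ and $x\in S$, then $x$ is nilpotent in $S$ if and only if it is nilpotent in $R$, since the condition $x^n=0$ refers only to the element $x$ and the operations it is subject to, not to the ambient ring. In particular, shrinking the ambient ring can never destroy a nilpotency relation among elements that survive in the subring.

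With this in hand, the argument is short. Let $R$ be Yaqub nil-clean and let $S\subseteq R$ be a subring. Fix $a\in S$; viewing $a$ as an element of $R$, the Yaqub nil-clean hypothesis gives $a+a^3\in N(R)$ or $a-a^3\in N(R)$. But $S$ is closed under addition, subtraction and multiplication, so $a+a^3\in S$ and $a-a^3\in S$; by the remark above, whichever of them is nilpotent in $R$ is already nilpotent in $S$. Hence $a\pm a^3\in N(S)$, and since $a\in S$ was arbitrary, $S$ is Yaqub nil-clean.

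I do not anticipate a genuine obstacle here; the only point worth a sentence is to note that the argument uses nothing about $S$ beyond closure under the ring operations, so it is insensitive to whether "subring" is taken to include the identity of $R$ or not, and in particular it applies to corner rings $eRe$. As an alternative route one could instead invoke Lemma 2.1: $R$ is Yaqub nil-clean precisely when $a^2$ is strongly weakly nil-clean for every $a\in R$, and the class of strongly weakly nil-clean rings is closed under subrings by the same absoluteness of nilpotence; but the direct argument above is the cleanest.
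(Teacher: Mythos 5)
Your argument is correct and coincides with the paper's own proof: for $a\in S$ one has $(a\pm a^3)^n=0$ in $R$ for some $n$, and this identity holds verbatim in $S$, so $a\pm a^3\in N(S)$. The additional remarks on the identity element and on corners are fine but not needed.
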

\begin{proof} Let $S$ be a subring of a Yaqub nil-clean ring $R$. For any $a\in S$, we see that $a\in R$, and so there exists some $n\in {\Bbb N}$ such that $(a\pm a^3)^n=0$ in $R$; hence, $(a\pm a^3)^n=0$ in $S$. This implies that $S$ is Yaqub nil-clean.\end{proof}

As a consequence of Lemma 2.3, every center of a Yaqub nil-clean ring is Yaqub nil-clean. This generalizes ~\cite[Theorem 2]{ST} as well.

\begin{prop} Every corner of Yaqub nil-clean rings is Yaqub nil-clean.
\end{prop}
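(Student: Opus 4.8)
The plan is to follow the proof of Lemma 2.3 almost verbatim. The one point that needs care is that a corner $eRe$, with $e = e^2 \in R$, is \emph{not} a unital subring of $R$ (its identity is $e$, not $1_R$), so Lemma 2.3 cannot be quoted directly; however, the addition and multiplication of $eRe$ are still just the restrictions of those of $R$, and the property ``Yaqub nil-clean'' is phrased purely in terms of these operations together with nilpotency, so the argument transfers.

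First I would fix an idempotent $e = e^2 \in R$ and take an arbitrary $a \in eRe$, so that $ea = ae = a$. From this a one-line induction gives $a^k = e a^k e \in eRe$ for every $k \geq 1$; since $eRe$ is closed under addition and negation, both $a + a^3$ and $a - a^3$ lie in $eRe$. Next, viewing $a$ as an element of $R$ and using that $R$ is Yaqub nil-clean, one of $a + a^3$, $a - a^3$ is nilpotent in $R$, say $(a \pm a^3)^n = 0$ for some $n \in {\Bbb N}$. Because the product in $eRe$ coincides with the product in $R$, the element $(a \pm a^3)^n$ computed inside $eRe$ is literally the same as the one computed inside $R$, hence it equals $0$ in $eRe$ as well. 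Thus $a \pm a^3 \in N(eRe)$, and as $a$ was arbitrary, $eRe$ is Yaqub nil-clean.

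I do not expect any genuine obstacle here; the only thing to watch is precisely the identity issue noted above. If a more structural route is preferred, one can instead invoke Theorem 2.2: $J(eRe) = eJ(R)e$ is nil since $J(R)$ is nil, while $eRe/J(eRe)$ is (isomorphic to) a corner of $R/J(R)$, and the pointwise identity $x^3 = \pm x$ passes to corners because cubing and negation in a corner agree with those in the ambient ring. Then the ``if'' part of Theorem 2.2 shows $eRe$ is Yaqub nil-clean. The direct computation, however, is the shortest and is the one I would write out.
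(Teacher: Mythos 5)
Your argument is correct and is essentially the paper's: the paper simply observes that $eRe$ is a subring of $R$ and invokes Lemma 2.3, whose proof is exactly the elementwise nilpotency computation you wrote out. Your caution about $eRe$ not containing $1_R$ is a fair point of rigor, but since the definition of Yaqub nil-clean and the proof of Lemma 2.3 never use the identity, it changes nothing of substance.
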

\begin{proof} Let $e\in R$ be an idempotent. We will suffice to prove that $eRe$ is Yaqub nil-clean. As $eRe$ is a subring of $R$, we complete the proof by
Lemma 2.3.\end{proof}

\begin{thm} Let
$\{R_{i}~|~i\in I\}$ be a family of rings. Then the direct product $R=\prod\limits_{i\in I}
R_{i}$ of rings $R_i$ is Yaqub nil-clean if and only if each
$R_{i}$ is Yaqub nil-clean and at most one is not strongly 2-nil-clean.\end{thm}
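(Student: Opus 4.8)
The plan is to prove the two implications separately: the forward one follows from Proposition 2.4 together with an explicit obstruction element, and the backward one is cleanest when organised through Theorem 2.2.

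For ``only if'', observe that each $R_i$ is isomorphic to the corner $\varepsilon_iR\varepsilon_i$, where $\varepsilon_i$ is the idempotent of $R$ that is $1_{R_i}$ in the $i$-th coordinate and $0$ elsewhere; hence Proposition 2.4 makes every $R_i$ Yaqub nil-clean. For the ``at most one'' clause I would actually establish the sharper assertion that there is no pair of distinct indices $i,j$ with $R_i$ not strongly $2$-nil-clean and $R_j$ not strongly nil-clean --- this does subsume ``at most one $R_i$ is not strongly $2$-nil-clean'', since a ring which is not strongly $2$-nil-clean is a fortiori not strongly nil-clean. Suppose such $i\neq j$ existed. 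Choose $a\in R_i$ with $a-a^3\notin N(R_i)$; then $a+a^3\in N(R_i)$ by Yaqub nil-cleanness. Since $R_j$ is not strongly nil-clean and $J(R_j)$ is nil (Theorem 2.2), its quotient $R_j/J(R_j)$, which is commutative and satisfies $x^5=x$ by the proof of Theorem 2.2, cannot have characteristic $2$ (otherwise, being commutative with $x^5=x$, it would be Boolean, so $R_j$ would be strongly nil-clean); thus $2\notin J(R_j)$, so $2\notin N(R_j)$. Let $x\in R$ be the element that is $a$ in slot $i$, $1_{R_j}$ in slot $j$, and $0$ elsewhere. Then the $i$-th coordinate of $x-x^3$ is $a-a^3\notin N(R_i)$ and the $j$-th coordinate of $x+x^3$ is $1_{R_j}+1_{R_j}^{\,3}=2\notin N(R_j)$, so neither $x-x^3$ nor $x+x^3$ is nilpotent --- contradicting that $R$ is Yaqub nil-clean.

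For ``if'', I would work under the hypothesis that if some factor fails to be strongly $2$-nil-clean then every other factor is strongly nil-clean (the slightly stronger form that the forward direction in fact shows is the genuinely necessary one), and apply Theorem 2.2. Since $J(R)=\prod_iJ(R_i)$ and $R/J(R)\cong\prod_iR_i/J(R_i)$, it suffices to check that $J(R)$ is nil and that $\prod_iR_i/J(R_i)$ satisfies $x^3=\pm x$. For the identity, renumber so that $R_i$ is strongly $2$-nil-clean for all $i\neq1$, and take $\bar x=(\bar a_i)$. If $\bar a_1^{\,3}=\bar a_1$, then $\bar a_i^{\,3}=\bar a_i$ for every $i$, so $\bar x^{\,3}=\bar x$; if $\bar a_1^{\,3}=-\bar a_1\neq\bar a_1$, one wants $\bar a_i^{\,3}=-\bar a_i$ for every $i$ as well.

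This last case of the sign bookkeeping is the step I expect to be the main obstacle. The identity $x^3=-x$ is strictly stronger than $x^3=x$: it already forces characteristic $2$, hence the Boolean identity $x^2=x$, so merely knowing that the non-exceptional $R_i/J(R_i)$ satisfy $x^3=x$ does not give $\bar a_i^{\,3}=-\bar a_i$. What makes the argument close is precisely the strengthened hypothesis: each non-exceptional $R_i$ is strongly nil-clean, so $R_i/J(R_i)$ is Boolean, $2\in N(R_i)$, and $\bar a_i^{\,3}=\bar a_i=-\bar a_i$ --- both signs are simultaneously available in those coordinates, and the sign forced in coordinate $1$ propagates. A secondary, routine point, relevant only when $I$ is infinite, is a uniform bound on the nilpotency indices needed so that $J(R)=\prod_iJ(R_i)$ is actually nil; for finite $I$ this is automatic. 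Once the sign is pinned down and $J(R)$ is seen to be nil, Theorem 2.2 yields that $R$ is Yaqub nil-clean.
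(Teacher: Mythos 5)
Your ``only if'' half is essentially the paper's own argument: the paper also picks $a\in R_k$ with $a-a^3\notin N(R_k)$, observes $2\notin N(R_l)$, and uses the element $(a,1)$ to contradict Yaqub nil-cleanness of the image $R_k\times R_l$; you reach each factor through corners (Proposition 2.4) instead of through homomorphic images, and you extract $2\notin N(R_j)$ from ``not strongly nil-clean'' via Theorem 2.2 rather than from ``not strongly $2$-nil-clean'', which gives a sharper necessary condition. That half is correct and subsumes the stated one.

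The ``if'' half is where the genuine problems lie. First, you do not prove the stated implication: you silently replace ``at most one $R_i$ is not strongly $2$-nil-clean'' by ``if some factor is not strongly $2$-nil-clean then all the others are strongly nil-clean'', and these hypotheses are not equivalent, so as a proof of the theorem as printed this is a gap. It is, however, a revealing one: the printed sufficiency is actually false --- ${\Bbb Z}_3\times{\Bbb Z}_5$ has both factors Yaqub nil-clean with only ${\Bbb Z}_5$ failing to be strongly $2$-nil-clean, yet for $a=(1,2)$ neither $a-a^3=(0,4)$ nor $a+a^3=(2,0)$ is nilpotent (compare the paper's own Corollary 3.2 and Example 3.5). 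So your strengthened hypothesis is the correct repair, and the paper's one-line verification (``one easily checks'') is precisely the step that fails; but you must state explicitly that you are proving a corrected statement rather than the quoted one. Second, the point you dismiss as ``secondary and routine'' --- that $J(R)=\prod_iJ(R_i)$ be nil when $I$ is infinite --- is neither secondary nor routine: taking $R_n=T_n({\Bbb Z}_2)$, every factor is strongly nil-clean, yet $\prod_nJ(R_n)$ contains elements of unbounded nilpotency index, so $J\bigl(\prod_nR_n\bigr)$ is not nil and $\prod_nR_n$ is not Yaqub nil-clean by Theorem 2.2. Hence even the corrected sufficiency fails for infinite index sets (and the paper's claim that an arbitrary product of strongly $2$-nil-clean rings is strongly $2$-nil-clean fails for the same reason); your argument is valid only for finite $I$ or under an added bounded-index hypothesis, and that restriction must be made explicit. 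The sign bookkeeping you flagged as the main obstacle is in fact fine: the non-exceptional quotients are Boolean, so both signs are available in those coordinates, exactly as you observed.
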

\begin{proof} $\Longrightarrow$ As homomorphic images of $R$, we see that all $R_i$ are Yaqub nil-clean rings. Suppose that
$R_{k}$ and $R_{l} (k\neq l)$ are not strongly 2-nil-clean. Then we can find some $a\in R_k$ such that $a-a^3\not\in N(R_k)$ and $2\not\in N(R_l)$.
Then $(a,1)\in R_k\times R_l$ and $(a,1)-(a,1)^3, (a,1)+(a,1)^3\not\in N(R_k\times R_l)$. Thus, $R_k\times R_l$ is not Yaqub nil-clean. This contradicts to the Yaqub nil-cleanness of $R$. Therefore at most one $R_i$ is not strongly 2-nil-clean.

$\Longleftarrow $ If each $R_i$ is strongly 2-nil-clean, then so is $R$. If $R_k$ is Yaqub nil-clean and each $R_i (i\neq k)$ is strongly 2-nil-clean. One easily checks that $R\cong \big(\prod\limits_{i\neq k}R_i\big)\times R_k$ is Yaqub nil-clean, as asserted.\end{proof}

In particular, we have

\begin{cor} Let
$L=\prod\limits_{i\in I}R_{i}$ be the direct product of rings
$R_i\cong R$ and $|I|\geq 2$. Then $L$ is Yaqub nil-clean if and
only if $R$ is strongly 2-nil-clean if and only if $L$ is strongly 2-nil-clean.\end{cor}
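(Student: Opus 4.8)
The plan is to deduce both stated equivalences from the single cycle of implications
$$L \text{ strongly 2-nil-clean} \Longrightarrow L \text{ Yaqub nil-clean} \Longrightarrow R \text{ strongly 2-nil-clean} \Longrightarrow L \text{ strongly 2-nil-clean}.$$
The first arrow costs nothing: as recorded in the Introduction, every strongly 2-nil-clean ring is Yaqub nil-clean, so there is nothing to prove for $L \text{ strongly 2-nil-clean}\Rightarrow L \text{ Yaqub nil-clean}$.

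The substantive step is the middle arrow, and here I would simply feed the family $\{R_i \mid i\in I\}$ into Theorem 2.5. Since $L=\prod_{i\in I}R_i$ is Yaqub nil-clean, that theorem says each $R_i$ is Yaqub nil-clean and \emph{at most one} $R_i$ fails to be strongly 2-nil-clean. Now bring in the two standing hypotheses $R_i\cong R$ for all $i$ and $|I|\geq 2$: if $R$ were not strongly 2-nil-clean, then every $R_i$ — and there are at least two of them — would fail to be strongly 2-nil-clean, contradicting the ``at most one'' clause. Hence $R$ is strongly 2-nil-clean.

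For the last arrow I would again invoke Theorem 2.5 (its ``$\Longleftarrow$'' half) or \cite{CS}: a direct product of strongly 2-nil-clean rings is strongly 2-nil-clean, and since every factor $R_i\cong R$ is now known to be strongly 2-nil-clean, $L$ is strongly 2-nil-clean. Closing the cycle yields ``$L$ Yaqub nil-clean $\Leftrightarrow$ $R$ strongly 2-nil-clean $\Leftrightarrow$ $L$ strongly 2-nil-clean'', which is the assertion.

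I do not expect a genuine obstacle; the corollary is essentially Theorem 2.5 specialized to a direct power. The only points that merit a moment's care are (i) the combinatorial translation: for mutually isomorphic factors indexed by a set of size at least $2$, ``at most one factor is not strongly 2-nil-clean'' is the same as ``the common factor $R$ is strongly 2-nil-clean''; and (ii) that strong 2-nil-cleanness is closed under forming direct products, which is precisely what powers the reverse implication and is already used inside the proof of Theorem 2.5.
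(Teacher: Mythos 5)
Your proposal is correct and follows exactly the route the paper intends: Corollary 2.6 is stated as an immediate specialization of Theorem 2.5, and your cycle of implications (the ``at most one non-strongly-2-nil-clean factor'' clause combined with $|I|\geq 2$ and mutually isomorphic factors for the forward direction, and closure of strong 2-nil-cleanness under direct products, as used in Theorem 2.5's converse, for the reverse) is precisely that derivation spelled out.
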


\begin{lem} Let $I$ be a nil ideal of a ring $R$. Then $R$ is Yaqub nil-clean if and only if $R/I$ is Yaqub nil-clean.\end{lem}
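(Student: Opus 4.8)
The plan is to prove the two implications separately, the ``only if'' direction being essentially immediate and the ``if'' direction resting on the fact that nilpotence lifts through a nil ideal.

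For the ``only if'' direction I would use that $R/I$ is a homomorphic image of $R$: applying the canonical surjection $\pi\colon R\to R/I$ to a relation $(a\pm a^3)^n=0$ valid in $R$ gives $\big(\pi(a)\pm\pi(a)^3\big)^n=0$ in $R/I$, so $\pi(a)\pm\pi(a)^3\in N(R/I)$ for every $a\in R$. Since $\pi$ is onto, this says $R/I$ is Yaqub nil-clean. (This is the same observation already used in the proof of Theorem 2.5 to pass to homomorphic images, and does not even need $I$ to be nil.)

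For the ``if'' direction, fix $a\in R$ and write $\overline{a}$ for its image in $R/I$. By hypothesis $\overline{a}+\overline{a}^3\in N(R/I)$ or $\overline{a}-\overline{a}^3\in N(R/I)$, so there are $n\in{\Bbb N}$ and a sign $\varepsilon\in\{+1,-1\}$ with $(a+\varepsilon a^3)^n\in I$. Because $I$ is nil, the element $(a+\varepsilon a^3)^n$ of $I$ is itself nilpotent, hence $(a+\varepsilon a^3)^{nm}=0$ for some $m\in{\Bbb N}$, i.e. $a+\varepsilon a^3\in N(R)$. As $a$ was arbitrary, $R$ is Yaqub nil-clean.

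There is no real obstacle here; the only points requiring a little care are that the choice of sign in the ``$\pm$'' is made per element, and that the nilpotence exponent of an element of $I$ must be combined with the exponent $n$ coming from $R/I$ to produce a single exponent killing $a+\varepsilon a^3$ in $R$. As an alternative route one could invoke Theorem 2.2: since $I$ is nil we have $I\subseteq J(R)$, whence $J(R/I)=J(R)/I$ and $(R/I)/J(R/I)\cong R/J(R)$; then $J(R)$ is nil if and only if $J(R/I)$ is nil (again because $I$ is nil), and the identity $x^3=\pm x$ on $R/J(R)$ is literally the same condition on $(R/I)/J(R/I)$, which yields the equivalence at once.
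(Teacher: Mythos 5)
Your proof is correct and follows essentially the same route as the paper: the forward direction by passing to the homomorphic image, and the converse by noting that $(a\pm a^3)^n\in I$ and lifting nilpotence through the nil ideal $I$. The alternative argument via Theorem 2.2 is also valid, but the main argument is the one the paper gives.
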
\begin{proof} One direction is obvious. Conversely, assume that $R/I$ is Yaqub nil-clean. Let $a\in R$. Then $\overline{a\pm a^3}\in N(R/I)$, and so $(a\pm a^3)^{m}\in I$ for some $m\in {\Bbb N}$. As $I$ is nil, we have $n\in {\Bbb N}$ such that $(a\pm -a^3)^{mn}=0$, i.e, $a\pm a^3\in N(R)$. This completes the proof.\end{proof}

We use $T_n(R)$ to denote the ring of all $n\times n$ upper triangular matrices over a ring $R$. We have

\begin{thm} Let $R$ be a ring, and let $n\geq 2$. Then the following are equivalent:
\end{thm}
\begin{enumerate}
\item [(1)] {\it $T_n(R)$ is Yaqub nil-clean.}
\vspace{-.5mm}
\item [(2)] {\it $T_n(R)$ is strongly 2-nil-clean.}
\vspace{-.5mm}
\item [(3)] {\it $R$ is strongly 2-nil-clean.}
\end{enumerate}
\begin{proof} $(1)\Rightarrow (3)$ Let $I=\{
\left(
\begin{array}{cccc}
0&a_{12}&\cdots&a_{1n}\\
&0&\cdots &a_{2n}\\
&&\ddots&\vdots\\
&&&0
\end{array}
\right)\in T_n(R) | $ each $a_{ij}\in R\}$. Then $T_n(R)/I\cong \prod\limits_{i\in I}R_{i}$, where each $R_i=R$. Clearly,
$\prod\limits_{i\in I}R_{i}$ is Yaqub nil-clean. In light of Corollary 2.6, $R$ is strongly 2-nil-clean.

$(3)\Rightarrow (2)$ This is proved in ~\cite[Corollary 2.6]{CS}.

$(2)\Rightarrow (1)$ This is obvious.\end{proof}

\section{Structure Theorems}

\vskip4mm The aim of this section is to investigate structure of Yaqub nil-clean rings. A ring $R$ is periodic if for any $a\in R$ there exist distinct $m,n\in {\Bbb N}$ such that $a^m=a^n$. We now derive

\begin{thm} A ring $R$ is Yaqub nil-clean if and only if $R\cong R_1, R_2,R_3,R_1\times R_2$ or $R_1\times R_3$, where
\end{thm}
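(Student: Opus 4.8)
The plan is to reduce everything to the already-established Theorem 2.2, which says that $R$ is Yaqub nil-clean iff $J(R)$ is nil and $R/J(R)$ satisfies the identity $x^3=\pm x$ (in particular $R/J(R)$ is commutative and satisfies $x^5=x$). So the whole content is a structure theorem for semiprime commutative rings $S$ with the identity $x^3 = \pm x$: I claim such an $S$ is isomorphic to one of $S_1$ (Boolean), $S_2$ (a Yaqub ring, i.e. subdirect product of $\mathbb{Z}_3$'s), $\mathbb{Z}_5$, $S_1\times S_2$, or $S_1\times\mathbb{Z}_5$. Granting this, one takes $R_1,R_2,R_3$ to be the preimages in $R$ of $S_1,S_2,\mathbb{Z}_5$ under $R\to R/J(R)$; idempotents lift modulo the nil ideal $J(R)$, so a product decomposition of $R/J(R)$ lifts to a product decomposition of $R$, and each factor $R_i$ has nil Jacobson radical with the prescribed quotient. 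The converse direction is easy: each of the listed rings has nil Jacobson radical and a quotient satisfying $x^3=\pm x$ (Boolean gives $x^3=x$, a Yaqub ring gives $x^3=x$ since $\mathbb{Z}_3$ does, $\mathbb{Z}_5$ gives $x^3=-x$... wait, check: in $\mathbb{Z}_5$, $2^3=8=3=-2$, $3^3=27=2=-3$, $1^3=1$, $4^3 = 64 = 4 = -1 \cdot$ no, $4 = -1$ and $(-1)^3 = -1 = 4$, so $x^3 = x$ fails but $x^3 = -x$? $1^3 = 1 \ne -1$; so $\mathbb{Z}_5$ satisfies neither identity globally but each element satisfies $x^3 = \pm x$, which is exactly what ``Yaqub nil-clean'' requires), so Lemma 2.7 and Theorem 2.2 close the loop, and a product $R_1\times\mathbb{Z}_5$ is Yaqub nil-clean by Theorem 2.5 since $\mathbb{Z}_5$ is the single factor that is not strongly $2$-nil-clean.

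The heart of the argument is the commutative semiprime case. Here I would argue as follows. Since $S$ satisfies $x^5=x$ it is von Neumann regular and periodic, hence (by commutativity) a subdirect product of fields, each of which satisfies $x^5=x$, so each is $\mathbb{Z}_2$, $\mathbb{Z}_3$, $\mathbb{Z}_4 = \mathbb{F}_4$... wait $\mathbb{F}_4$ has $x^5 = x^{5 \bmod 3} = x^2 \ne x$ in general; actually $x^5 = x$ in a field $\mathbb{F}_q$ iff $q-1 \mid 4$, i.e. $q \in \{2,3,5\}$. So the only subdirectly irreducible quotients are $\mathbb{Z}_2$, $\mathbb{Z}_3$, $\mathbb{Z}_5$. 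Now use the pointwise identity $x^3=\pm x$: if two different prime factors $\mathbb{Z}_p$ and $\mathbb{Z}_q$ both occur among $\{\mathbb{Z}_3,\mathbb{Z}_5\}$ — i.e. if both $\mathbb{Z}_3$ and $\mathbb{Z}_5$ are quotients of $S$ — pick $a$ mapping to a generator of trouble in each: say $a \mapsto 1 \in \mathbb{Z}_3$ and $a \mapsto 2 \in \mathbb{Z}_5$; then $a - a^3 \mapsto (0, 2-3) = (0,-1) \ne 0$ and $a + a^3 \mapsto (2, 0) \ne 0$, and since $S$ is semiprime this forces $a\pm a^3 \notin N(S)$, contradiction. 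Hence at most one of $\mathbb{Z}_3$, $\mathbb{Z}_5$ appears. This splits $S$ (via its Boolean part — the $\mathbb{Z}_2$-components — which is a ring-direct summand since the set of elements killed by $x^3 - x$... more carefully: $e = $ the central idempotent cutting out the $\mathbb{Z}_2$-part) into $S_{\text{Bool}} \times S'$ where $S'$ is a subdirect product of $\mathbb{Z}_3$'s only (a Yaqub ring) or a subdirect product of $\mathbb{Z}_5$'s only; and in the latter case I must further show $S' \cong \mathbb{Z}_5$, not a larger subdirect product — this follows because a product of two or more copies of $\mathbb{Z}_5$ is not Yaqub nil-clean (Corollary 2.6, since $\mathbb{Z}_5$ is not strongly $2$-nil-clean), and a subdirect product of $\mathbb{Z}_5$'s that is von Neumann regular with more than one factor contains $\mathbb{Z}_5\times\mathbb{Z}_5$ as a corner or subring, contradicting Lemma 2.3 / Proposition 2.4. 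Similarly, the $\mathbb{Z}_2$-part and $\mathbb{Z}_3$-part can coexist (giving $\mathbb{Z}_2$ and $\mathbb{Z}_3$ components simultaneously is fine because $1 - 1^3 = 0$ and $1 + 1^3 = 2 = 0$ in $\mathbb{Z}_2$... one needs to recheck the pointwise condition survives, and it does precisely because each element individually lands in a $\pm$ situation consistently — the obstruction only arose from $\mathbb{Z}_3$-vs-$\mathbb{Z}_5$ because there $1+1^3 \ne 0$ in $\mathbb{Z}_3$ forces the $+$ branch while $2+2^3 \ne 0$ in $\mathbb{Z}_5$ forces the $-$ branch).

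The main obstacle I anticipate is the bookkeeping in the subdirect-product analysis — cleanly extracting the Boolean ($\mathbb{Z}_2$) direct summand as an actual ring-theoretic factor rather than just a subdirect component, and ruling out ``mixed'' subdirect products of $\mathbb{Z}_5$'s larger than $\mathbb{Z}_5$ itself. For the first, I would identify the central idempotent explicitly: in a commutative $x^5=x$ ring, $b := a^4$ is idempotent for every $a$, and the set of all idempotents forms a Boolean algebra; the element $e = \bigvee\{\,$idempotents landing in the $\mathbb{Z}_2$-part$\,\}$ — or more concretely, use that $3 \in S$ is idempotent-times-unit structure. Actually cleaner: since $S$ has $x^5 = x$, the ideal $6S$ and annihilator considerations, or simply: $\mathrm{char}$ considerations split $S$ according to the prime field of each stalk. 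For the second obstacle, Corollary 2.6 is exactly the tool: if $S'$ is a subdirect product of copies of $\mathbb{Z}_5$ indexed by $|I| \ge 2$, it need not literally contain $\prod_I \mathbb{Z}_5$, but von Neumann regularity of $S'$ (from $x^5=x$) means every finitely generated ideal is generated by an idempotent, so $S'$ has a nontrivial idempotent decomposition $S' = A \times B$ with $A, B$ again subdirect products of $\mathbb{Z}_5$'s, and iterating with the fact that each such factor surjects onto $\mathbb{Z}_5$ gives a surjection $S' \twoheadrightarrow \mathbb{Z}_5 \times \mathbb{Z}_5$; since the latter is not Yaqub nil-clean (Theorem 2.5, as $\mathbb{Z}_5$ is not strongly $2$-nil-clean) while Yaqub nil-cleanness passes to quotients by nil — here by zero — ideals (Lemma 2.7), this is a contradiction, forcing $|I| = 1$ and $S' \cong \mathbb{Z}_5$. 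Once the semiprime case is done, the lifting-idempotents step back up to $R$ is routine and the theorem follows.
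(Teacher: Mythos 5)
Your overall architecture (reduce via Theorem 2.2 to the semiprimitive case, note that pointwise $x^3=\pm x$ forces $x^5=x$, hence $S=R/J(R)$ is a commutative von Neumann regular subdirect product of fields $\mathbb{F}_2,\mathbb{F}_3,\mathbb{F}_5$, then use CRT on pairs of distinct maximal ideals to exclude the mixed $\mathbb{F}_3$/$\mathbb{F}_5$ case and to force the char-$5$ part to be a single copy of $\mathbb{Z}_5$) is sound and in the char-$5$ step genuinely slicker than the paper, which instead proves reducedness of $R_3/J(R_3)$ by hand via periodicity, cleanness and an $M_2(T)$-corner computation. The converse direction via Theorem 2.2 is also fine.

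There is, however, a genuine gap at the step where you pass back from $R/J(R)$ to $R$: you assert that ``idempotents lift modulo the nil ideal $J(R)$, so a product decomposition of $R/J(R)$ lifts to a product decomposition of $R$.'' That principle is false: lifting an idempotent modulo a nil ideal gives an idempotent, not a \emph{central} one, and only central idempotents yield ring decompositions; $R$ here is not assumed commutative. Concretely, $R=T_2(\mathbb{Z}_2)$ is Yaqub nil-clean with $J(R)$ nil and $R/J(R)\cong \mathbb{Z}_2\times\mathbb{Z}_2$, yet the central idempotent $(1,0)$ lifts only to the non-central idempotents $E_{11}$ and $E_{11}+E_{12}$, and $R$ is indecomposable; so decompositions of $R/J(R)$ do not lift in general, even inside this class of rings. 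The repair is exactly what the paper does first: apply the defining condition to the element $2$ to get $6\in N(R)$ or $10\in N(R)$, and then decompose $R$ itself by CRT on the comaximal ideals $2^nR$ and $3^nR$ (resp.\ $5^nR$), which are generated by central elements, so no idempotent lifting is needed; equivalently, observe that the idempotent of $R/J(R)$ you want is the image of an integer (e.g.\ $9$, resp.\ $25$), and the standard nil-lifting produces an idempotent that is a polynomial in that central element, hence central. With that substitution (and the same characteristic bookkeeping you already sketch for splitting $S$), your argument goes through; as written, the forward direction does not.
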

\begin{enumerate}
\item [(1)] {\it $R_1/J(R_1)$ is Boolean and $J(R_1)$ is nil;}
\vspace{-.5mm}
\item [(2)] {\it $R_2/J(R_2)$ is a Yaqub ring and $J(R_2)$ is nil.}
\vspace{-.5mm}
\item [(3)] {\it $R_3/J(R_3)\cong {\Bbb Z}_5$ and $J(R_3)$ is nil.}
\end{enumerate}
\begin{proof} $\Longleftarrow $ In view of ~\cite[Theorem 2.7]{KWZ}, $R_1$ is strongly nil-clean. By virtue of~\cite[Theorem 4.2]{CS}, $R_2$ is strongly 2-nil-clean.
Hence, $R_1,R_2$ and $R_1\times R_2$ are strongly 2-nil-clean, and then Yaqub nil-clean. Let $a\in R_3$. Since $R_3/J(R_3)\cong {\Bbb Z}_5$, we easily check that $\overline{a\pm a^3}=\overline{0}$ in $R_3/J(R_3)$; and so $a\pm a^3\in J(R_3)\subseteq N(R_3)$. Thus, $R_3$ is Yaqub nil-clean.
Let $(a,b)\in R_1\times R_3$. Then $b\pm b^3\in N(R_3)$.
Since $R_1/J(R_1)$ is Boolean, we see that $(a,b)\pm (a,b)^3=(a-a^3,b\pm b^3)=(a-a^2+a(a-a^2),b\pm b^3)\in N(R_1\times R_3)$. Hence, $R_1\times R_3$ is Yaqub nil-clean. Therefore $R$ is Yaqub nil-clean.

$\Longrightarrow$ Since $2\pm 2^3\in N(R)$, we see that $2\times 3\in N(R)$ or $2\times 5\in N(R)$. Let $r\in J(R)$. Then $r\pm r^3\in N(R)$, and so $r\in J(R)$. Thus, $J(R)$ is nil.

Case I. $2\times 3\in N(R)$. Then $R\cong R_1,R_2$ or $R_1\times R_2$, where $R_1=R/2^nR,R_2=R/3^nR (n\in {\Bbb N})$.

Case II. $2\times 5\in N(R)$. Then $R\cong R_1,R_3$ or $R_1\times R_3$, where $R_1=R/2^nR,R_3=R/5^nR (n\in {\Bbb N})$.

We easily see that each $J(R_i)$ is nil.

Step 1. Let $S=R_1/J(R_1)$. As $2\in N(S)$, we easily see that $S$ is strongly 2-nil-clean. In view of~\cite[Theorem 2.11]{CS}, $S$ is strongly nil-clean.
According to~\cite[Theorem 2.7]{KWZ}, $S\cong S/J(S)$ is Boolean.

Step 2. Let $S=R_2/J(R_2)$. The $3\in N(S)$. Let $a\in S$. We claim that $a-a^3\in N(S)$. Otherwise, we have $a+a^3\in N(S)$, and so $(1+a)^3-(1+a)$ or $(1+a)^3+(1+a)\in N(S)$. This implies that $a^3-a\in N(S)$ or $a^3+a+2\in N(S)$. This gives a contradiction. Thus, $S$ is strongly 2-nil-clean, by~\cite[Theorem 2.3]{CS}; hence, $S/J(S)$ has the identity $x^3=x$ by ~\cite[Theorem 3.3]{CS}. In light of~\cite[Lemma 4.4]{CS}, $S\cong S/J(S)$ is a Yaqub ring.

Step 3. Let $S=R_3/J(R_3)$. Hence, $x^3\pm x\in N(S)$ for all $x\in S$. Firstly, we claim that $S$ is reduced. If not, there exists $0\neq x\in R$ such that $x^2=0$. As in the proof of~\cite[Proposition 3.5]{CS}, $R$ is periodic; hence, it is clean. Thus, there exists $0\neq g^2=g\in SxS$ such that $gSg\cong M_2(T)$, where $T$ is a nontrivial ring (see~\cite[Lemma 2.7]{Z}). Choose $y=\left(
\begin{array}{cc}
1&1\\
1&0
\end{array}
\right)\in M_2(T)$. Then $y^3=\left(
\begin{array}{cc}
3&2\\
2&1
\end{array}
\right).$ Hence,
$$y-y^3=\left(
\begin{array}{cc}
-2&-1\\
-1&-1
\end{array}
\right)~\mbox{and}~y+y^3=\left(
\begin{array}{cc}
4&3\\
3&0
\end{array}
\right).$$ We easily check that
$$(y-y^3)^2=\left(
\begin{array}{cc}
0&3\\
3&2
\end{array}
\right)~\mbox{and}~y+y^3=\left(
\begin{array}{cc}
0&2\\
2&-1
\end{array}
\right).$$ As $2\in U(T)$, we see that $(y-y^3)^2, (y+y^3)^2$ are invertible, and so $y-y^3,y+y^3\not\in N(M_2(T))$. This gives a contradiction. Therefore $S$ is reduced.

Secondly, we claim that $S$ has no nontrivial idempotents. Clearly, $S$ is abelian. Assume that $0,1\neq g^2=g\in S$. Then $S\cong gS\times (1-g)S$. In view of Theorem 2.5, $gS$ or $(1-g)S$ is strongly 2-nil-clean. Thus, $6\in N(gS)$ or $6\in N((1-g)S)$. But $5\in N(S)$ in $S$, we see that $1\in N(gS)$ or $1\in N((1-g)S)$, a contradiction. Thus, $g=0$ or $1$.

Therefore $S$ is a reduced ring without any trivial idempotents and $5\in N(S)$. Let $0,1\neq u\in R$. Then $u^3=\pm u$, and so $u^2$ or $-u^2$ is an idempotent. This implies that $u^2=1$ or $u^2=-1$.

Case I. $u^2=1$. Then $0,1\neq u-1\in R$. By the preceding discussion, we get $(u-1)^2=1$ or $(u-1)^2=-1$. This shows that
$u=2$ or $-1$. We infer that $u=4$.

Case II. $u^2=-1$. If $u\neq 2$, then $0,1\neq u-1\in R$. Hence, we have $(u-1)^2=1$ or $(u-1)^2=-1$.
This shows that $u=2$ or $3$. This implies that $u=2$ or $3$.

Therefore we conclude that $S={\Bbb Z}_5$.\end{proof}

\begin{cor} A ring $R$ is Yaqub nil-clean if and only if
\end{cor}
\begin{enumerate}
\item [(1)] {\it $a-a^5\in R$ is nilpotent for all $a\in R$;}
\vspace{-.5mm}
\item [(2)] {\it $R$ has no homomorphic images ${\Bbb Z}_3\times {\Bbb Z}_5, {\Bbb Z}_5\times {\Bbb Z}_5$.}
\end{enumerate}
\begin{proof} $\Longrightarrow$ Let $a\in R$. Then $a\pm a^3\in N(R)$, and so $a-a^5=(a\pm a^3)\mp a^2(a\pm a^3)\in N(R)$. By virtue of Theorem 3.1, we easily see that $R$ has no homomorphic images ${\Bbb Z}_3\times {\Bbb Z}_5, {\Bbb Z}_5\times {\Bbb Z}_5$.

$\Longleftarrow $ In light of~\cite[Theorem 2.1]{Z}, $R\cong A,B,C$ or product of such rings, where $A/J(A)$ is Boolean with $J(A)$ is ni, $B/J(B)$ is a subdirect product of ${\Bbb Z}_3's$ with $J(B)$ is nil, and $C/J(C)$ is a subdirect product of ${\Bbb Z}_5's$ with $J(C)$ is nil. By hypothesis,
we prove that $R$ is Yaqub nil-clean, in terms of Theorem 3.1.\end{proof}

\begin{cor} A ring $R$ is strongly 2-nil-clean if and only if
\end{cor}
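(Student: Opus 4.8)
The plan is to read this corollary off from Corollary 3.2 and Theorem 3.1, the only genuinely new input being that ${\Bbb Z}_5$ is Yaqub nil-clean but \emph{not} strongly $2$-nil-clean. Since the displayed statement is truncated, I expect the two missing conditions to be: (1) $a-a^5\in R$ is nilpotent for all $a\in R$; and (2) $R$ has no homomorphic image ${\Bbb Z}_5$. The argument then splits into the two implications in the obvious way.

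For the ``only if'' direction I would argue as follows. If $R$ is strongly $2$-nil-clean, then $a-a^3\in N(R)$ for every $a\in R$; since $a-a^3$ commutes with $1+a^2$ (both lie in ${\Bbb Z}[a]$), the product $a-a^5=(1+a^2)(a-a^3)$ is nilpotent, giving (1). For (2), I would use that strong $2$-nil-cleanness passes to homomorphic images (a homomorphic image of a nilpotent is nilpotent), while ${\Bbb Z}_5$ is not strongly $2$-nil-clean because for $a=2$ we have $a-a^3=2-8=-1\notin N({\Bbb Z}_5)$; hence $R$ admits no homomorphic image ${\Bbb Z}_5$.

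For the ``if'' direction I would assume (1) and (2) and first observe that any homomorphic image ${\Bbb Z}_3\times{\Bbb Z}_5$ or ${\Bbb Z}_5\times{\Bbb Z}_5$ of $R$ would itself surject onto ${\Bbb Z}_5$, so (2) rules these out and Corollary 3.2 applies: $R$ is Yaqub nil-clean. By Theorem 3.1, $R$ is isomorphic to $R_1$, $R_2$, $R_3$, $R_1\times R_2$, or $R_1\times R_3$ with the radical quotients described there. In the two cases involving $R_3$ one has surjections $R\twoheadrightarrow R_3\twoheadrightarrow R_3/J(R_3)\cong{\Bbb Z}_5$, contradicting (2); hence $R\cong R_1$, $R_2$, or $R_1\times R_2$. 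As already noted inside the proof of Theorem 3.1, $R_1$ is strongly nil-clean (hence strongly $2$-nil-clean), $R_2$ is strongly $2$-nil-clean, and a finite direct product of strongly $2$-nil-clean rings is strongly $2$-nil-clean; therefore $R$ is strongly $2$-nil-clean.

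The only point I expect to need a word of care is confirming that condition (2) excises \emph{precisely} the ${\Bbb Z}_5$-type pieces of the structure theorem: a surjection onto the semisimple ring ${\Bbb Z}_5$ annihilates the (nil) Jacobson radical, so it would induce a surjection from $R_i/J(R_i)$ onto ${\Bbb Z}_5$, which is impossible since $R_1/J(R_1)$ is Boolean and $R_2/J(R_2)$ is a Yaqub ring (whose only field images are ${\Bbb Z}_2$ and ${\Bbb Z}_3$), whereas the $R_3$ cases manifestly do surject onto ${\Bbb Z}_5$. Beyond this, the whole proof is just case bookkeeping already performed for Theorem 3.1, so I anticipate no real obstacle.
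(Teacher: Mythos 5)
Your guess at the truncated conditions does not match the paper: the corollary actually reads ``$R$ is strongly 2-nil-clean if and only if (1) $6\in R$ is nilpotent and (2) $R$ is Yaqub nil-clean,'' not the pair you supplied ($a-a^5\in N(R)$ for all $a$, and no homomorphic image ${\Bbb Z}_5$). So, as written, your argument proves a different (admittedly true) characterization of strongly 2-nil-clean rings, not the paper's statement. Concretely, the paper's forward direction consists of producing $6\in N(R)$ --- quoted from \cite[Theorem 3.6]{CS}, and in any case immediate since $6=-(2-2^3)$ is nilpotent in a strongly 2-nil-clean ring --- a step with no counterpart in your write-up, while your verification that $R$ admits no surjection onto ${\Bbb Z}_5$ has no counterpart in the paper's proof.

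That said, your ``if'' direction is structurally the same as the paper's: both feed Yaqub nil-cleanness (which you first recover via Corollary 3.2) into the decomposition of Theorem 3.1 and then excise the $R_3$-type factors before invoking the strongly 2-nil-clean conclusions for $R_1$, $R_2$, $R_1\times R_2$ (the paper cites \cite[Theorem 4.5]{CS}). The difference lies only in how the excision is done: the paper notes that $6\in N(R)$ forces $5\in U(R)$, which is incompatible with a factor whose quotient is $R_3/J(R_3)\cong {\Bbb Z}_5$ (there $5=0$), whereas you use the hypothesis that $R$ has no image ${\Bbb Z}_5$. Your auxiliary observations (Boolean and Yaqub rings cannot surject onto ${\Bbb Z}_5$, finite products of strongly 2-nil-clean rings are strongly 2-nil-clean) are correct. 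If you swap in the paper's hypotheses, the repair is one line: from $6\in N(R)$ deduce $5\in U(R)$ and hence that the $R_3$ cases of Theorem 3.1 cannot occur; the rest of your argument then goes through verbatim. But as submitted, the proposal proves a statement with different hypotheses and omits the $6\in N(R)$ step, so it is not a proof of the paper's corollary.
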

\begin{enumerate}
\item [(1)] {\it $6\in R$ is nilpotent;}
\vspace{-.5mm}
\item [(2)] {\it $R$ is Yaqub nil-clean.}
\end{enumerate}
\begin{proof} $\Longrightarrow$ In view of~\cite[Theorem 3.6]{CS}, $6\in N(R)$. $(2)$ is obvious.

$\Longleftarrow$ Since $6\in N(R)$, we see that $5\in U(R)$. In view of Theorem 3.1,  $R\cong R_1, R_2$ or $R_1\times R_2$, where
$R_1/J(R_1)$ is Boolean with $J(R_1)$ nil and $R_2/J(R_2)$ is a Yaqub ring with $J(R_2)$ nil. This completes the proof by~\cite[Theorem 4.5]{CS}.\end{proof}

\begin{cor} A ring $R$ is strongly nil-clean if and only if\end{cor}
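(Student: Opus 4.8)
The plan is to follow the template of Corollary 3.3, with $2$ in place of $6$; accordingly I would state the two conditions as (1) $2\in R$ is nilpotent, and (2) $R$ is Yaqub nil-clean. For the forward implication, assume $R$ is strongly nil-clean. Then $J(R)$ is nil and $R/J(R)$ is Boolean by \cite[Theorem 2.7]{KWZ}; since $2$ is zero in every Boolean ring, we get $2\in J(R)\subseteq N(R)$, which is (1). Moreover $a-a^3=(a-a^2)(1+a)$ and the two factors commute, so $a-a^2\in N(R)$ forces $a-a^3\in N(R)$; hence $R$ is strongly $2$-nil-clean and in particular Yaqub nil-clean, which is (2).

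For the converse, suppose $2\in N(R)$ and $R$ is Yaqub nil-clean. By Theorem 3.1, $R$ is isomorphic to $R_1$, $R_2$, $R_3$, $R_1\times R_2$ or $R_1\times R_3$, where $R_1/J(R_1)$ is Boolean, $R_2/J(R_2)$ is a Yaqub ring, $R_3/J(R_3)\cong {\Bbb Z}_5$, and each $J(R_i)$ is nil. I would then argue that the hypothesis $2\in N(R)$ eliminates every case except $R\cong R_1$. Indeed, $2\in N(R)$ descends to $2\in N(R_i)$ for each direct factor. If $R_2$ occurred as a nonzero factor, the image of $2$ in the Yaqub ring $R_2/J(R_2)$, a subdirect product of copies of ${\Bbb Z}_3$, would be nilpotent; but $2=-1\in U({\Bbb Z}_3)$, so $2$ is a unit (hence non-nilpotent) in any such subdirect product, forcing $R_2/J(R_2)=0$ and so $R_2=0$. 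Similarly $2$ is a unit in ${\Bbb Z}_5$, so $R_3$ cannot occur as a nonzero factor. Therefore $R\cong R_1$ with $R_1/J(R_1)$ Boolean and $J(R_1)$ nil, and \cite[Theorem 2.7]{KWZ} gives that $R$ is strongly nil-clean.

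The only step that requires genuine care is this last reduction: one must note that a nonzero subdirect product of rings in which $2$ is a unit cannot have $2$ nilpotent, and keep track of how $2\in N(R)$ restricts to the direct factors produced by Theorem 3.1. Beyond this bookkeeping, the corollary is an immediate consequence of the structure theorem together with the characterization of strongly nil-clean rings in \cite{KWZ}, so I do not anticipate a serious obstacle.
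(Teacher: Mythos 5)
Your proposal is correct, but its route differs from the paper's in both directions. The paper handles the forward implication purely by citation: $2\in N(R)$ comes from \cite[Proposition 3.14]{D} and the Yaqub nil-clean property from \cite[Corollary 2.5]{KWZ}; you instead derive both from the characterization ``$J(R)$ nil and $R/J(R)$ Boolean'' of \cite[Theorem 2.7]{KWZ} together with the identity $a-a^3=(a-a^2)(1+a)$, which is self-contained and fine. For the converse, the paper stays at the level of element conditions: from $2\in N(R)$ it gets $6\in N(R)$, invokes Corollary 3.3 to conclude $R$ is strongly 2-nil-clean, and then applies \cite[Theorem 2.11]{CS} (strongly 2-nil-clean with $2$ nilpotent implies strongly nil-clean). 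You instead go straight to the structure theorem (Theorem 3.1) and kill the factors $R_2$ and $R_3$ by observing that $2$ is a unit in any nonzero subdirect product of ${\Bbb Z}_3$'s and in ${\Bbb Z}_5$, while $2$ is nilpotent in every direct factor, so only $R\cong R_1$ survives, and \cite[Theorem 2.7]{KWZ} finishes. Both arguments ultimately rest on the same structural information (Corollary 3.3 is itself proved from Theorem 3.1), so neither is deeper than the other; your version trades the citations to \cite[Theorem 2.11]{CS} and \cite{D} for a short unit-versus-nilpotent elimination inside the decomposition, which makes the corollary essentially self-contained given Theorem 3.1, whereas the paper's version is shorter on the page because it leans on previously established equivalences. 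The only point to state explicitly in your write-up is the (easy) fact that an element that is simultaneously a unit and nilpotent forces the ring to be zero, which is what makes the elimination of $R_2$ and $R_3$ legitimate.
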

\begin{enumerate}
\item [(1)] {\it $2$ is nilpotent;}
\vspace{-.5mm}
\item [(2)] {\it $R$ is a Yaqub nil-clean.}
\end{enumerate}
\begin{proof} $\Longrightarrow$ $(1)$ follows from~\cite[Proposition 3.14]{D}.

$(2)$ This is obvious, by ~\cite[Corollary 2.5]{KWZ}.

$\Longleftarrow$ As $R$ is Yaqub nil-clean and $6\in N(R)$, $R$ is strongly 2-nil-clean by Corollary 3.3. Since $2\in N(R)$, it follows by~\cite[Theorem 2.11]{CS} that $R$ is strongly nil-clean.\end{proof}

\begin{exam} Let $R={\Bbb Z_n} (n\geq 2)$. Then $R$ is a Yaqub nil-clean ring if and only if $n=2^k3^l5^s~ (k,l,s~\mbox{are nonnegitive integers and}~ ls=0)$.\end{exam}
\begin{proof} $\Longrightarrow$ Let $p$ be a prime such that $p|n$. Then $n=pq$ with $(p,q)=1$. Hence, $R\cong {\Bbb Z}_p\times {\Bbb Z}_q$.
This shows that ${\Bbb Z}_p$ is a Yaqub nil-clean ring. Hence, $p=2,3$ or $5$. If $kl\neq 0$, then ${\Bbb Z}_{3}\times {\Bbb Z}_5$ is a Yaqub nil-clean, a contradiction. Therefore $n=2^k3^l5^s$ for some nonnegitive integers $k,l,s$ and $ls=0$.

$\Longleftarrow$ Since $n=2^k3^l5^s (ls=0)$, we see that $R\cong {\Bbb Z}_{2^k}\times{\Bbb Z}_{3^l}$ or ${\Bbb Z}_{2^k}\times{\Bbb Z}_{5^l}$. Clearly,
$J({\Bbb Z}_{2^k})=2{\Bbb Z}_{2^k}, J({\Bbb Z}_{3^l})=3{\Bbb Z}_{3^l}$ and $J({\Bbb Z}_{5^s})=5{\Bbb Z}_{5^s}$ are all nil. Moreover, $${\Bbb Z}_{2^k}/J({\Bbb Z}_{2^k})\cong {\Bbb Z}_2, {\Bbb Z}_{3^l}/J({\Bbb Z}_{3^l})\cong {\Bbb Z}_3~\mbox{and}~{\Bbb Z}_{5^s}/J({\Bbb Z}_{5^s})\cong {\Bbb Z}_5.$$ According to
Theorem 3.1, $R$ is a Yaqub nil-clean ring.\end{proof}

We are now ready to prove the following.

\begin{thm} A ring $R$ is Yaqub nil-clean if and only if
\end{thm}
\begin{enumerate}
\item [(1)] {\it $J(R)$ is nil;}
\vspace{-.5mm}
\item [(2)] {\it $R/J(R)$ is isomorphic to a Boolean ring $R_1$, a Yaqub ring $R_2$, ${\Bbb Z}_5$, $R_1\times R_2$, or $R_1\times {\Bbb Z}_5$.}
\end{enumerate}
\begin{proof} $\Longrightarrow $ In view of Theorem 3.1, $R\cong R_1, R_2,R_3, R_1\times R_2$ or $R_1\times R_3$, where
\begin{enumerate}
\item [(i)] {\it $R_1/J(R_1)$ is Boolean and $J(R_1)$ is nil;}
\vspace{-.5mm}
\item [(ii)] {\it $R_2/J(R_2)$ is a Yaqub ring and $J(R_2)$ is nil.}
\vspace{-.5mm}
\item [(iii)] {\it $R_3/J(R_3)\cong {\Bbb Z}_5,$ $J(R_3)$ is nil.}
\end{enumerate} Therefore $J(R)$ is nil and $R/J(R)\cong R_1/J(R_1), R_2/J(R_2), R_3/J(R_3),$ $R_1/J(R_1)\times R_2/J(R_2)$ or $R_1/J(R_1)\times R_3/J(R_3)$, as required.

$\Longleftarrow$ Let $a\in R$. By hypothesis, we easily check that $\overline{a\pm a^3}=\overline{0}$. As $J(R)$ is nil, $a\pm a^3\in J(R)\subseteq N(R)$, as desired.\end{proof}

\begin{cor} A ring $R$ is Yaqub nil-clean if and only if\end{cor}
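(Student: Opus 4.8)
The plan is to read this characterization off the structure theorem. By \cite[Theorem~2.8]{CS}, a ring is strongly $2$-nil-clean exactly when every element differs from a commuting tripotent by a nilpotent, so the ``$a-e$'' alternative alone already settles every strongly $2$-nil-clean ring; in the decomposition of Theorem~3.1 this disposes of $R_1$, $R_2$ and $R_1\times R_2$ (all shown there to be strongly $2$-nil-clean, $R_1$ being even strongly nil-clean). Thus the forward direction reduces to the cases $R_3$ and $R_1\times R_3$, and then there is the converse to prove.

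For $R_3$ I would argue as follows. Here $R_3/J(R_3)\cong{\Bbb Z}_5$ with $J(R_3)$ nil, so $N(R_3)=J(R_3)$ since ${\Bbb Z}_5$ is commutative and reduced. Given $a\in R_3$, its residue lies in $\{\overline 0,\overline{\pm1},\overline{\pm2}\}$, and I would take the central tripotent $e\in\{0,\pm1\}$: when $\overline a\in\{\overline 0,\overline{\pm1}\}$ a suitable choice gives $a-e\in J(R_3)=N(R_3)$, and when $\overline a=\overline{\pm2}$ one has $\overline{a+3(\pm1)}=\overline 0$, so $a+3e\in N(R_3)$ with $e=\pm1$. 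For $R_1\times R_3$, first pick $f\in\{0,\pm1\}$ handling the $R_3$-coordinate $b$ as above. If $b-f\in N(R_3)$, pair $f$ with the idempotent part of the strongly nil-clean decomposition $a=e+w$ in $R_1$, so that $(a,b)-(e,f)\in N(R_1\times R_3)$. If instead $b+3f\in N(R_3)$, use that $2\in J(R_1)$ forces $3\in U(R_1)$: writing $-3^{-1}a=g+v$ with $g^2=g$, $v\in N(R_1)$, $gv=vg$, one gets $a+3g=-3v\in N(R_1)$, whence $(a,b)+3(g,f)\in N(R_1\times R_3)$. In every case the element produced is a tripotent commuting with the argument.

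For the converse, assume the tripotent condition; I claim $a\pm a^3\in N(R)$ for all $a$. The first step is to extract $30\in N(R)$ by feeding the hypothesis the single element $2$: in the ``$2-e$'' case, substituting $e=2-n$ with $n\in N(R)$ into $e^3=e$ and expanding gives $6=n(11-6n+n^2)\in N(R)$; in the ``$2+3e$'' case, comparing $27e=(3e)^3$ with $(3e)^3=(n-2)^3$, where $n=2+3e$, gives $10=-n(n^2-6n+3)\in N(R)$; either way $30\in N(R)$, hence $30e\in N(R)$ for any $e$ since $30$ is a central nilpotent. Now fix $a$ and a commuting tripotent $e$ supplied by the hypothesis and work in the commutative subring ${\Bbb Z}[a,e]$ using $e^3=e$. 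If $a-e=w\in N(R)$, then $a-a^3=w(1-3e^2-3ew-w^2)\in N(R)$. If $a+3e=w\in N(R)$, then $a=w-3e$ and $a+a^3=w(1+w^2-9we+27e^2)-30e\in N(R)$, the first summand being nilpotent and $30e\in N(R)$. Hence $a\pm a^3\in N(R)$, i.e.\ $R$ is Yaqub nil-clean.

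I expect the main obstacle to be the $R_1\times R_3$ case of the forward direction: one tripotent must work in both coordinates simultaneously, so when the $R_3$-coordinate is forced into the ``$a+3e$'' shape the $R_1$-coordinate must be put into the ``$a+3g$'' shape too, and this is exactly where invertibility of $3$ in $R_1$ (a consequence of $2\in J(R_1)$, via $R_1/J(R_1)$ Boolean) is used. A smaller but crucial point is noticing, in the converse, that testing the hypothesis on $2$ alone already forces $30\in N(R)$, which is what makes the stray $-30e$ term in the ``$a+3e$'' computation harmless.
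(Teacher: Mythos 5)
You have proved a different statement from the one under review. The corollary in question is Corollary~3.7: $R$ is Yaqub nil-clean if and only if (1) $R$ is \emph{periodic} and (2) $R/J(R)$ is isomorphic to a Boolean ring $R_1$, a Yaqub ring $R_2$, ${\Bbb Z}_5$, $R_1\times R_2$ or $R_1\times {\Bbb Z}_5$. What you establish instead is the commuting-tripotent characterization (``for every $a$ there is $e^3=e$ with $ae=ea$ and $a-e$ or $a+3e$ nilpotent''), which is the paper's Theorem~3.10, not this corollary. Nothing in your argument touches the two conditions actually at issue: in the forward direction you never show that a Yaqub nil-clean ring is periodic (the paper obtains this as in \cite[Proposition 3.5]{CS}, essentially from $a-a^5\in N(R)$), nor do you identify $R/J(R)$ (the paper just quotes Theorem~3.6, which rests on the structure Theorem~3.1); in the converse you never use periodicity, whose only role in the paper's proof is to give that $J(R)$ is nil, after which Theorem~3.6 applies. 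So, measured against Corollary~3.7, the proposal is a complete miss rather than a partial proof.

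For what it is worth, the mathematics you did write appears correct and would serve as a tidy proof of Theorem~3.10: your direct handling of the ${\Bbb Z}_5$-case by lifting residues to tripotents $e\in\{0,\pm1\}$ replaces the paper's Lemmas~3.8--3.9, your use of $3\in U(R_1)$ in the mixed case $R_1\times R_3$ parallels (and slightly complicates) the paper's observation that $a+3e=(a-e)+4e$ with $4\in N(R_1)$, and your extraction of $30\in N(R)$ from the single test element $2$ together with the identity $a+a^3=w(1+w^2-9we+27e^2)-30e$ matches the paper's converse. But to prove the corollary you would still need the periodicity argument and the passage through Theorem~3.6 (or Theorem~3.1), none of which appears in the proposal.
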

\begin{enumerate}
\item [(1)]{\it $R$ is periodic;}
\vspace{-.5mm}
\item [(2)]{\it $R/J(R)$ is isomorphic to a Boolean ring $R_1$, a Yaqub ring $R_2$, ${\Bbb Z}_5$, $R_1\times R_2$, or $R_1\times {\Bbb Z}_5$.}
\end{enumerate}
\begin{proof} $\Longrightarrow$ As in the proof of~\cite[Proposition 3.5]{CS}, $R$ is periodic. $(2)$ follows by Theorem 3.6.

$\Longleftarrow$ Since $R$ is periodic, we easily check that $J(R)$ is nil. This completes the proof by Theorem 3.6.\end{proof}

\begin{lem} Let $R$ be a ring with $5\in N(R)$, and let $a\in R$. Then the following are equivalent:\end{lem}
\begin{enumerate}
\item [(1)]{\it $a+a^3\in R$ is nilpotent.}
\vspace{-.5mm}
\item [(2)]{\it There exists $e\in {\Bbb Z}[a]$ such that $a-e\in N(R)$ and $e^3=4e$.}
\end{enumerate}
\begin{proof} $(1)\Rightarrow (2)$ Suppose that $a+a^3\in R$ is nilpotent. Set $x=3a$. Then
$x^3-x=-30a+w$ for some $w\in N(R)$. This shows that $x^3-x\in N(R)$. As $(5^n,2)=1$, we easily see that $2\cdot 1_R\in U(R)$.
In light of~\cite[Lemma 2.6]{KWZ}, there exists $\theta\in {\Bbb Z}[x]$ such that $\theta^3=\theta$ and $x-\theta\in N(R)$.

Take $\beta=2(x-\theta)-5a$. Then $\beta\in N(R)$. Further, we see that $\beta=a-2\theta$.
Set $e=2\theta\in R$. Then $a-e\in N(R)$ and $e\in {\Bbb Z}[a]$. Moreover,
$e^3-4e=8\theta^3-8\theta=0$, as desired.

$(2)\Rightarrow (1)$ Let $a\in R$. Then we have $e\in {\Bbb Z}[a]$ such that $w:=a-e\in N(R)$ and $e^3=4e$. Hence, $a+a^3=(e+w)+(e^3+3e^2w+3ew^2+w^3)=5e+(3e^2+3ew+w^2)w\in N(R)$, as required.\end{proof}

\begin{lem} Let $R$ be a ring with $5\in N(R)$, and let $a\in R$. Then the following are equivalent:\end{lem}
\begin{enumerate}
\item [(1)]{\it $a+a^3\in R$ is nilpotent.}
\vspace{-.5mm}
\item [(2)]{\it There exists $e^3=e\in R$ such that $a+3e\in N(R)$ and $ae=ea$.}
\end{enumerate}
\begin{proof} $(1)\Rightarrow (2)$ In view of Lemma 3.8, there exists $f\in {\Bbb Z}[a]$ such that $a-f\in N(R)$ and
$f^3=4f$. As $5\in R$ is nilpotent, we see that $2\in U(R)$. Set $e=\frac{f}{2}$. Then $e^3=e$ and $a+3e=(a-2e)+5e=(a-f)+5e\in N(R)$, as desired.

$(2)\Rightarrow (1)$ Let $a\in R$. Then we have $e^3=e$ such that $w:=a+3e\in N(R)$ and $ae=ea$. This implies that
$a+a^3=(3e+w)+(27e^3+27e^2w+9ew^2+w^3)=30e+(27e^2+9ew+w^2)w\in N(R)$, as needed.\end{proof}

We come now to the demonstration for which this section has been developed.

\begin{thm} Let
$R$ be a ring. Then the following are equivalent:\end{thm}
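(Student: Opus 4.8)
The theorem presumably lists Yaqub nil\-cleanness together with the condition ``for every $a\in R$ there is $e^{3}=e$ with $ae=ea$ and $a-e\in N(R)$ or $a+3e\in N(R)$'' and the condition ``$R$ is an exchange Hirano ring'' (possibly also restating the structural descriptions of Theorems 3.1 and 3.6). Since those two theorems already tie Yaqub nil\-cleanness to the structural descriptions, the plan is to prove: (i) Yaqub nil\-clean $\Rightarrow$ the tripotent condition; (ii) the tripotent condition $\Rightarrow$ Yaqub nil\-clean; and (iii) Yaqub nil\-clean $\Leftrightarrow$ exchange Hirano. I would invoke Theorems 2.2, 3.1, 3.6 and Lemma 3.9 freely.

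For (i) I would run through the five shapes of $R$ given by Theorem 3.1. When $R$ is $R_{1}$, $R_{2}$, or $R_{1}\times R_{2}$ it is strongly $2$\-nil\-clean, so~\cite[Theorem 2.8]{CS} supplies a tripotent $e$ with $ae=ea$ and $a-e\in N(R)$. When $R\cong R_{3}$, for a given $a$ either $\overline a\in\{\overline 0,\overline 1,\overline{-1}\}$ in $R_{3}/J(R_{3})\cong{\Bbb Z}_{5}$, in which case the corresponding member of $\{0,1,-1\}$ is a central tripotent $e$ with $a-e\in J(R_{3})\subseteq N(R_{3})$, or $\overline a\in\{\overline 2,\overline 3\}$, which forces $a+a^{3}\in N(R_{3})$ and then Lemma 3.9 produces a tripotent $e$ with $a+3e\in N(R_{3})$ and $ae=ea$. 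For $R\cong R_{1}\times R_{3}$ I would work coordinatewise, using that $2\in N(R_{1})$, so the tripotent $e_{1}$ from~\cite[Theorem 2.8]{CS} satisfies both $a_{1}-e_{1}\in N(R_{1})$ and $a_{1}+3e_{1}=(a_{1}-e_{1})+4e_{1}\in N(R_{1})$; hence $(e_{1},e_{3})$ can be fitted to whichever of the two forms the $R_{3}$\-coordinate demands.

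For (ii), fix $a$ and the tripotent $e$ with $ae=ea$ and $a-e\in N(R)$ or $a+3e\in N(R)$. If $a-e\in N(R)$, then $a-a^{3}=(a-e)(1-a^{2}-ae-e^{2})\in N(R)$ and we are done, so assume $a+3e\in N(R)$. Put $f=e^{2}$ (an idempotent commuting with $a$ and $e$) and $S=fRf$; a Peirce\-decomposition calculation shows that $a=faf+(1-f)a(1-f)$ with the two summands in the orthogonal corners $fRf,(1-f)R(1-f)$, with $(1-f)a(1-f)$ nilpotent, while inside $S$ the element $v=e$ satisfies $v^{2}=1_{S}$, $v^{3}=v$, and $faf+3v\in N(S)$. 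Expanding $faf\pm(faf)^{3}$ reduces the goal ``$a-a^{3}\in N(R)$ or $a+a^{3}\in N(R)$'' to ``$24\cdot 1_{S}\in N(S)$ or $30\cdot 1_{S}\in N(S)$''. I would obtain the latter by applying the tripotent condition to $2e\in R$ (using $e^{3}=e$): if $2e-h\in N(R)$, then $2e-(2e)^{3}=-6e\in N(R)$, so $6\cdot 1_{S}$ and hence $24\cdot 1_{S}$ is nilpotent; if $2e+3h\in N(R)$, then cubing gives $8e+27h\in N(R)$, subtracting $4(2e+3h)$ gives $15h\in N(R)$, hence $45h\in N(R)$, and then $30e=5(6e+9h)-45h\in N(R)$, so $30\cdot 1_{S}$ is nilpotent. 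Pulling these back through $S$ yields $a-a^{3}\in N(R)$ in the first subcase and $a+a^{3}\in N(R)$ in the second, so $R$ is Yaqub nil\-clean.

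For (iii), the forward direction is quick: if $R$ is Yaqub nil\-clean and $u\in U(R)$, then $u\pm u^{3}=u(1\pm u^{2})$ is nilpotent and commutes with $u$, whence $1\pm u^{2}\in N(R)$, so $R$ is Hirano; and $R$ is exchange because $J(R)$ is nil (Theorem 3.1) while $R/J(R)$ is a product of (some of) a Boolean ring, a Yaqub ring, and ${\Bbb Z}_{5}$, each von Neumann regular, so $R/J(R)$ is regular, idempotents lift modulo $J(R)$, and $R$ is exchange iff $R/J(R)$ is. The converse --- exchange Hirano $\Rightarrow$ Yaqub nil\-clean --- is the main obstacle. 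The plan there is: first show $J(R)$ is nil by applying the Hirano condition to the unit $(1-r)^{-1}$ for $r\in J(R)$, which yields $r(r-2)(1-r)^{-2}\in N(R)$ (or the companion relation $2-2r+r^{2}\in N(R)$) and forces $r\in N(R)$ once $2$ is invertible, the remaining situation being reduced to this by splitting $R$ through a central idempotent into a ``$2$ invertible'' part and a ``$2$ nilpotent'' part; then $R/J(R)$ is a semiprimitive exchange Hirano ring, shown to be reduced by adapting the $M_{2}(T)$\-corner argument from the proof of Theorem 3.1 (if $0\neq x$ with $x^{2}=0$, a corner of $R/J(R)$ is $M_{2}(T)$ with $T$ nontrivial, and one exhibits a unit $u$ of $M_{2}(T)$ for which neither $1-u^{2}$ nor $1+u^{2}$ is nilpotent, contradicting Hirano); and finally, a reduced exchange ring being abelian and strongly regular, the Hirano condition there reads $u^{2}=\pm 1$ for every unit, and the idempotent\-splitting argument of Theorem 3.1 then forces the identity $x^{3}=\pm x$ on $R/J(R)$, so Theorem 2.2 concludes. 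The delicate points I expect to fight with are the $2$--$3$--$5$ case analysis behind ``$J(R)$ is nil'' and the uniform choice of a unit of $M_{2}(T)$ that defeats Hirano without assuming $2$, $3$, or $5$ invertible in $T$.
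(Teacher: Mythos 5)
Your direction $(1)\Rightarrow(2)$ is essentially the paper's proof (Theorem 3.1 decomposition, the strongly 2-nil-clean cases via \cite{CS}, Lemma 3.9 for the $R_3$-alternative, and the $2\in N(R_1)$ trick $a_1+3e_1=(a_1-e_1)+4e_1$ for the mixed product), and your identity $a-a^3=(a-e)(1-a^2-ae-e^2)$ correctly handles the first alternative of $(2)\Rightarrow(1)$. The genuine problem is in your subcase B of $(2)\Rightarrow(1)$: the step ``cubing gives $8e+27h\in N(R)$'' is false. Indeed $(2e+3h)^3=8e+36e^2h+54eh^2+27h$, and the cross terms $36e^2h+54eh^2$ are products of tripotents, not of nilpotents, so nothing makes $8e+27h$ nilpotent by this computation; everything downstream ($15h\in N(R)$, $45h\in N(R)$, $30e\in N(R)$) rests on it. A second, unacknowledged subtlety: applying hypothesis $(2)$ to $2e$ only yields $(2e)h=h(2e)$, i.e.\ $2(eh-he)=0$, not $eh=he$, so each time you add two ``nilpotents'' involving $e$ and $h$ you owe a commutation check (these happen to succeed only because the relevant commutators all carry a factor $2$, e.g.\ $2(ew-we)=0$ for $w=2e+3h$). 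The subcase is repairable: from $w:=2e+3h\in N(R)$ write $3h=w-2e$; since $w$ commutes with $2e$, cubing gives $27h=(w-2e)^3=-8e+w\,p$ with $w\,p\in N(R)$ after the (even-coefficient) commutation checks, hence $27h+8e\in N(R)$, and then your chain $15h=(27h+8e)-4w$, $30e=15w-45h$ does give $30\cdot 1_S\in N(S)$. But as written the key computation fails.

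Even after repair, your route is much heavier than the paper's, which avoids the Peirce decomposition and the corner ring $S=fRf$ entirely: it specializes hypothesis $(2)$ at the single element $a=2$, so that $2-e$ or $2+3e$ nilpotent with $e^3=e$ forces $6\in N(R)$ or $10\in N(R)$, hence $30\in N(R)$ globally; then for arbitrary $a$ with commuting tripotent $f$, either $a-a^3=(f+w)-(f+w)^3\in N(R)$, or $a+a^3=(-3f+w)+(-3f+w)^3=-30f+(\mbox{element commuting with }w)\,w\in N(R)$. Finally, note that the theorem you are proving (Theorem 3.10) lists only the two conditions ``Yaqub nil-clean'' and the tripotent condition; the exchange/Hirano equivalence is a separate result (Theorem 5.4), so your part (iii) is not needed here, and in any case it is only a sketch, with the ``$J(R)$ is nil'' and $M_2(T)$ steps explicitly left unresolved.
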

\begin{enumerate}
\item [(1)]{\it $R$ is Yaqub nil-clean.}
\vspace{-.5mm}
\item [(2)]{\it For any $a\in R$, there exists $e^3=e$ such that $a-e$ or $a+3e$ is nilpotent and $ae=ea$.}
\end{enumerate}\begin{proof} $(1)\Rightarrow (2)$ In light of Theorem 3.1, $R\cong R_1, R_2,R_3,R_1\times R_2$ or $R_1\times R_3$, where
\begin{enumerate}
\item [(i)] {\it $R_1/J(R_1)$ is Boolean and $J(R_1)$ is nil;}
\vspace{-.5mm}
\item [(ii)] {\it $R_2/J(R_2)$ is a Yaqub ring and $J(R_2)$ is nil.}
\vspace{-.5mm}
\item [(iii)] {\it $R_3/J(R_3)\cong {\Bbb Z}_5$ and $J(R_3)$ is nil.}
\end{enumerate}

Case I. $R\cong R_1,R_2$ or $R_1\times R_2$. By virtue of~\cite[Theorem 4.5]{CS}, $R$ is strongly 2-nil-clean. Then for any $a\in R$, there exists $e^3=e$ such that $a-e$ is nilpotent.

Case II. $R\cong R_1,R_3$ or $R_1\times R_3$. Let $a\in R_1$. As $R_1$ is strongly nil-clean,
there exists an idempotent $e\in R_1$ such that $a-e\in N(R_1)$ and $ae=ea$. Since $2\in N(R_1)$, we see that $a+3e=a-e+4e\in N(R_1)$. Let $a\in R_3$. As $5\in N(R_3)$, we see that $2\in U(R_3)$. Let $a\in R_3$. Then $a-a^3\in N(R_3)$ or $a+a^3\in N(R_3)$. If $a-a^3\in N(R_3)$, by ~\cite[Lemma 2.6]{KWZ}, there exists $e^3=e\in R_3$ such that $a-e\in N(R_3)$ and $ae=ea$. If $a+a^3\in N(R_3)$, it follows by Lemma 3.9 that there exists $e^3=e\in R_3$ such that $a+3e\in N(R_3)$.
Therefore for any $x\in R_1\times R_3$, we can find $f^3=f\in R_1\times R_3$ such that $x-f$ or $x+3f$ is nilpotent in $R_1\times R_3$ and $xf=fx$, as desired.

$(2)\Rightarrow (1)$ By hypothesis, there exists $e^3=e$ such that $2-e$ or $2+3e$ is nilpotent. Hence, $2^3-2$ or $2^3-2\times 9$ is nilpotent.
This shows that $2\times 3\in N(R)$ or $2\times 5\in N(R)$. We infer that $30=2\times 3\times 5\in N(R)$.

Let $a\in R$. Then there exists $f^3=f\in R$ such that $a-f$ or $a+3f$ is nilpotent and $af=fa$. If $w:=a-f\in N(R)$, then $a-a^3=(f+w)-(f+w)^3\in N(R)$.
If $w:=a+3f\in N(R)$, then $a+a^3=(-3f+w)+(-3f+w)^3=-30f+(w^2-18f)w\in N(R)$, and so $a+a^3\in N(R)$. Therefore $R$ is Yaqub nil-clean.\end{proof}

\section{Hirano rings}

The goal of this section is to investigate elementary properties of Hirano rings which will be used in the sequel. We now derive

\begin{prop}
\end{prop}
\begin{enumerate}
\item [(1)] {\it Every subring of Hirano ring is a Hirano ring.}
\vspace{-.5mm}
\item [(2)] {\it If $R$ is a Hirano ring, then $eRe$ is a Hirano ring for all idempotents $e\in R$.}
\end{enumerate}
\begin{proof} $(1)$ Let $S$ be a subring of a Yaqub ring $R$, and $u\in U(S)$, so $u\in U(R)$ and $1_R\pm u^2\in N(R)$, this implies that $\pm u^2=1_R+w$ for some nilpotent element $w\in R$. Thus, $\pm u^2=\pm u^2\times 1_S=1_S+w\times 1_S$. As $u^2$ and $1_S$ are in $S$, then $w\times 1_S\in S$, and therefore $S$ is a Hirano ring.

$(3)$ This is obvious as $eRe$ is a subring of $R$.\end{proof}

We note that the finite direct product of Hirano rings may be not a Hirano ring.

\begin{exam} Let $R={\Bbb Z}_5\times {\Bbb Z}_5$. Then ${\Bbb Z}_5$ is a Hirano ring, while $R$ is not.\end{exam}
\begin{proof} Clearly, ${\Bbb Z}_5$ is a Hirano ring. Choose $u=(1,2)\in R$. Then $u\in U(R)$. We see that $(1,1)+u^2=(2,0)$ and $(1,1)-u^2=(0,2)$; hence,
$1_R+u^2$ and $1_R-u^2$ are not nilpotent. Thus, $R$ is not a Hirano ring.\end{proof}

\begin{exam} Let $R={\Bbb Z}_{5^n}[x]$ is a Hirano ring, but it is not clean.\end{exam}
\begin{proof} Let $f(x)=a_0+a_1x+\cdots +a_nx^n\in U(R)$. Then $5\nmid a_0$ and $5|a_i (i=1,\cdots,a_n)$.
Clearly, ${\Bbb Z}_{5^n}$ is a Hirano ring and $a_0\in U({\Bbb Z}_{5^n})$. Thus, $1\pm a_0\in N({\Bbb Z}_{5^n})$, i.e., $5|(1\pm a_0)$.
This shows that $5|(1\pm f(x))$, and so $1\pm f(x)\in N(R)$. Therefore $R$ is a Hirano ring. But it is not clean, as $x\in R$ can not be written as the
sum of an idempotent and a unit in $R$.
\end{proof}

\begin{lem} Let $I$ be a nil ideal of a ring $R$. Then $R$ is a Hirano ring if and only if so is $R/I$.\end{lem}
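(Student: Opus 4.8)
The plan mirrors the proof of Lemma 2.7, but with one extra ingredient. The ``if'' direction (an element nilpotent modulo a nil ideal is itself nilpotent) is identical in spirit to Lemma 2.7; the ``only if'' direction is genuinely different, because the Hirano condition quantifies only over units, so to push the property down to $R/I$ one must know that every unit of $R/I$ lifts to a unit of $R$. That is exactly where the hypothesis ``$I$ nil'' (beyond merely ``$I$ an ideal'') does its work: a nil ideal is contained in $J(R)$.

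So first I would record: (a) $I\subseteq J(R)$, since for $x\in I$ the element $1-x$ is a unit with inverse $1+x+x^2+\cdots$ (a finite sum, as $I$ is nil), and the same applies to $1-rx$ for every $r\in R$; and (b) consequently units lift modulo $I$: if $\bar u\in U(R/I)$ and $u\in R$ is any preimage, choose $v\in R$ with $uv-1,\ vu-1\in I\subseteq J(R)$, so that $uv$ and $vu$ are units of $R$, whence $u\in U(R)$.

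For ``$R/I$ Hirano $\Rightarrow$ $R$ Hirano'': given $u\in U(R)$ we have $\bar u\in U(R/I)$, so $1+\bar u^2\in N(R/I)$ or $1-\bar u^2\in N(R/I)$; in the first case $(1+u^2)^m\in I$ for some $m\in{\Bbb N}$, whence $(1+u^2)^{mn}=0$ for some $n\in{\Bbb N}$ because $I$ is nil, so $1+u^2\in N(R)$ (the minus case is the same), and $R$ is Hirano. For ``$R$ Hirano $\Rightarrow$ $R/I$ Hirano'': given $\bar u\in U(R/I)$, lift it to $u\in U(R)$ by (b); since $R$ is Hirano, $1+u^2\in N(R)$ or $1-u^2\in N(R)$, and applying the canonical map $R\to R/I$ gives $1+\bar u^2\in N(R/I)$ or $1-\bar u^2\in N(R/I)$. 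Hence $R/I$ is a Hirano ring.

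I do not expect a real obstacle here. The ``hard part,'' such as it is, is the unit-lifting step (b); everything else is the routine observation that nilpotence is detected modulo a nil ideal, so once one notes $I\subseteq J(R)$ the argument is essentially formal.
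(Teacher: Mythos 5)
Your proof is correct and follows essentially the same route as the paper: the direction ``$R/I$ Hirano $\Rightarrow R$ Hirano'' is handled exactly as in the paper (pass a unit $u\in U(R)$ to the quotient, apply the Hirano condition there, and lift nilpotence of $1\pm u^2$ through the nil ideal), while the other direction, which the paper dismisses as obvious, you justify carefully via $I\subseteq J(R)$ and unit lifting. Filling in that unit-lifting step is a welcome (and correctly executed) elaboration, not a different approach.
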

\begin{proof} $\Longrightarrow$ This is obvious.

$\Longleftarrow$ Let $u\in U(R)$, so $\pm \bar{u}^2=\bar{1}+\bar{w}$ for $\bar{w}\in N(R/I)$. Hence, $\pm u^2= 1+w+r$ for some $r\in I$. Here $w+r\in N(R)$.
This yields the result.\end{proof}

Recall that a ring $R$ is a 2-UU ring if for any $u\in U(R)$, $u^2$ is a unipotent, i.e., $1-u^2\in N(R)$~\cite{CS2}. We now derive

\begin{lem} Let
$L=\prod\limits_{i\in I}R_{i}$ be the direct product of rings
$R_i\cong R$ and $|I|\geq 2$. Then $L$ is a Hirano ring if and
only if $R$ is a 2-UU ring if and only if $L$ is a 2-UU ring.\end{lem}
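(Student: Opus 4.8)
The plan is to prove the three-way equivalence by a cyclic argument, using the nil-ideal reduction (Lemma 4.5) to pass freely between $L$ and $L/J(L)$, and then exploiting the fact that a product of isomorphic copies forces a strong rigidity on each factor. Throughout, write $S = R/J(R)$ and note that since $J(L) = \prod_{i\in I} J(R_i)$ is nil as soon as $J(R)$ is nil, and since $L/J(L) \cong \prod_{i\in I} S$, we may as well assume $R = S$ is $J$-semisimple once we have established that $J(R)$ is nil in each case; the Hirano and 2-UU properties only constrain units, so reducing modulo a nil ideal changes nothing by Lemmas 4.5 and by the obvious analogue for 2-UU rings.

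First I would show that if $L$ is a Hirano ring then $R$ is a 2-UU ring. Let $u \in U(R)$ and, using $|I| \geq 2$, form the unit $w = (u, 1, 1, \dots) \in U(L)$ (put $u$ in one coordinate and $1$ elsewhere). Since $L$ is Hirano, $1_L + w^2 \in N(L)$ or $1_L - w^2 \in N(L)$. Reading off the coordinates where $w$ equals $1$, we get $1 + 1 = 2 \in N(R)$ or $1 - 1 = 0 \in N(R)$; the latter is automatic, so in the ``$+$'' case we learn $2 \in N(R)$, and in the ``$-$'' case, reading the coordinate containing $u$, we get $1 - u^2 \in N(R)$ directly. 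To handle the ``$+$'' case: once $2 \in N(R)$, apply the Hirano hypothesis to the unit $w' = (u, 1, 1, \dots)$ again but now note $1 + w'^2$ being nil already gives $1 + u^2 \in N(R)$ in the $u$-coordinate, and $1 + u^2 = (1 - u^2) + 2u^2$ with $2u^2 \in N(R)$ forces $1 - u^2 \in N(R)$. Hence $R$ is 2-UU in all cases. The main subtlety here — and I expect this to be the principal obstacle — is the bookkeeping that separates the ``$\pm$'' alternatives coordinatewise: a single element of $L$ can be a sum $e + u$ witnessed by $+$ in some coordinates and by $-$ in others only if we are careful, but for units of the special form $(u,1,1,\dots)$ the alternative is global, and that is exactly what lets us pin down the factor $R$.

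Next I would show $R$ being 2-UU implies $L$ is 2-UU: this is routine, since $U(L) = \prod_i U(R_i)$ and $1_L - w^2 = (1 - w_i^2)_i$ is a product of nilpotents from isomorphic rings, hence nilpotent (a uniform bound on nilpotency indices is available because all $R_i \cong R$ and $R$ being 2-UU with, in particular, $2 \in N(R)$ — forced by taking $u = 1$, giving $1 - 1 = 0$, which is vacuous, so instead one observes directly that in a 2-UU ring $N(R)$ need not have bounded index; to get boundedness one restricts to the structural description and uses that 2-UU rings have a description analogous to Theorem 3.1). Finally, $L$ being 2-UU trivially implies $L$ is a Hirano ring, since $1 - u^2 \in N(L)$ already delivers one of the two required alternatives. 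To close the cycle cleanly I would instead organize it as: (2-UU of $R$) $\Rightarrow$ (2-UU of $L$) $\Rightarrow$ (Hirano of $L$) $\Rightarrow$ (2-UU of $R$), which avoids ever needing a bounded-index argument except in the one harmless direction where all factors are literally isomorphic and one can cite that a product of copies of a single nil element of index $k$ is nil of index $k$.
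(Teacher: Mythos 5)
Your treatment of the implication ($L$ Hirano $\Rightarrow$ $R$ 2-UU) is correct and is essentially the paper's own argument: both use the test unit $(u,1,1,\dots)\in U(L)$, and your handling of the ``$+$'' alternative (it forces $2\in N(R)$ in the coordinates carrying $1$, whence $1-u^2=(1+u^2)-2u^2$ is nilpotent) is if anything more direct than the paper's proof by contradiction; just note explicitly that $1+u^2$ and $2u^2$ commute, which is what makes their difference nilpotent. Likewise ($L$ 2-UU $\Rightarrow$ $L$ Hirano) is trivial in both accounts.

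The genuine gap is the direction ($R$ 2-UU $\Rightarrow$ $L$ 2-UU). For an infinite index set $I$, an element of $\prod_{i\in I}R_i$ is nilpotent only when its coordinates are nilpotent of \emph{uniformly bounded} index, so the coordinatewise identity $1_L-w^2=(1-w_i^2)_i$ proves nothing by itself: the $w_i$ are arbitrary, generally distinct, units of $R$, so your closing remark that ``a product of copies of a single nil element of index $k$ is nil of index $k$'' does not apply, and your own text concedes that $N(R)$ need not have bounded index in a 2-UU ring. The subsequent appeal to ``a structural description analogous to Theorem 3.1'' is precisely the missing ingredient and is never supplied, so this step of the cycle is not established. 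The paper does not attempt this direction by hand; it quotes \cite[Theorem 2.1]{CS2} for the equivalence ($R$ 2-UU $\Leftrightarrow$ $L$ 2-UU) and only argues the Hirano-to-2-UU implication directly. The same unbounded-index issue undermines your opening reduction modulo the Jacobson radical: $J(L)=\prod_{i\in I}J(R_i)$ need not be nil when $J(R)$ is nil and $I$ is infinite. You never actually use that reduction, so it should simply be removed, but the product direction needs either the citation or a genuine structural argument with a bounded-index conclusion.
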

\begin{proof} In view of~\cite[Theorem 2.1]{CS2}, $R$ is a 2-UU ring if and only if $L$ is a 2-UU ring. If $L$ is a 2-UU ring, we easily see that $L$ is a Hirano ring.

Suppose that $L$ is a Hirano ring. Then $R$ is a Hirano ring as a subring of $L$. If $R$ is not a 2-UU ring,
we can find some $u\in U(R)$ such that $u^2-1\not\in N(R)$. Additionally, $2\not\in N(R)$.
Choose $v:=(u,1,1,\cdots )\in U(L)$. Then $v^2-1_L, v^2+1_L\not\in N(L)$. This implies that
$L$ is not a Hirano ring, a contradiction. Therefore $R$ is a 2-UU ring, as asserted.\end{proof}

\begin{thm} Let $R$ be a ring, and let $n\geq 2$. Then the following are equivalent:
\end{thm}
\begin{enumerate}
\item [(1)] {\it $T_n(R)$ is a Hirano ring.}
\vspace{-.5mm}
\item [(2)] {\it $T_n(R)$ is a 2-UU ring.}
\vspace{-.5mm}
\item [(3)] {\it $R$ is a 2-UU ring.}
\end{enumerate}
\begin{proof} $(1)\Rightarrow (3)$ Choose $I$ as in the proof of Theorem 2.8. Then $I$ is a nil ideal of $R$. As $T_n(R)/I\cong \prod\limits_{i=1}^{n}R_{i}$ be the direct product of rings
$R_i\cong R$, it follows by Lemma 4.4 that $\prod\limits_{i=1}^{n}R_{i}$ is a Hirano ring. In light of Lemma 4.5, $R$ is a 2-UU ring, as required.

$(3)\Rightarrow (2)$ This is proved in~~\cite[Theorem 2.1]{CS2}.

$(2)\Rightarrow (1)$ This is trivial.\end{proof}

\begin{exam} The ring $M_2({\Bbb Z}_2)$ is not a Hirano ring.\end{exam}
\begin{proof} Choose $U=\left(
\begin{array}{cc}
0&1\\
1&1\\
\end{array}
\right)$. As $I_2\pm U^2=I_2\pm \left(
\begin{array}{cc}
1&1\\
1&2\\
\end{array}
\right)=\left(
\begin{array}{cc}
0&1\\
1&1\\
\end{array}
\right), \left(
\begin{array}{cc}
0&1\\
1&1\\
\end{array}
\right)$, we see that $I_2+U^2$ and $I_2-U^2$ are not nilpotent, as required.\end{proof}

\section{Exchange Properties}

The class of exchange rings is very large. For instances, local rings, regular rings, $\pi$-regular rings, (strongly) clean rings and $C^*$-algebras with real rank one are all exchange rings. We now characterize Yaqub nil-clean rings by means of their exchange properties.

\begin{lem} Let $R$ be an exchange ring. Then $-2\in R$ is clean.\end{lem}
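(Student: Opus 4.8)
The plan is to produce explicitly an idempotent $e$ and a unit $u$ in $R$ with $-2 = e + u$. First I would feed the central element $3 = 3\cdot 1_R$ into the exchange condition: this yields an idempotent $e \in R$ with $e \in 3R$ and $1 - e \in (1-3)R = (-2)R$. Since $-1 \in U(R)$ we have $(-2)R = 2R$, so we may write $e = 3u$ and $1 - e = 2v$ for suitable $u,v \in R$.

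Next I would extract from these two relations, using only that $2$ and $3$ are central, the facts that $3e$ is invertible in the corner $eRe$ and $2(1-e)$ is invertible in the corner $(1-e)R(1-e)$. Indeed, from $e = 3u$ one gets $e = e\cdot e = 3eu$, whence $e = e\cdot e = 3eue$, so $eue \in eRe$ is a two-sided inverse of $3e$ in $eRe$; symmetrically, writing $f = 1-e$, from $f = 2v$ one gets $f = f\cdot f = 2fv$, whence $f = f\cdot f = 2fvf$, so $fvf \in fRf$ is a two-sided inverse of $2f$ in $fRf$.

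The core of the argument is the observation that, because $2$ is central, $2 + e = 2(e + (1-e)) + e = 3e + 2(1-e)$. I would then check that $b := eue + (1-e)v(1-e)$ is a two-sided inverse of $2 + e$ in $R$: using the orthogonality $e(1-e) = (1-e)e = 0$ together with the identities $3eue = e$, $2(1-e)v(1-e) = 1-e$, $e\cdot eue = eue$ and $(1-e)v(1-e)\cdot(1-e) = (1-e)v(1-e)$, all cross terms vanish and one obtains $(2+e)b = e + (1-e) = 1$ and $b(2+e) = e + (1-e) = 1$. Hence $2 + e \in U(R)$, so $-2 - e = -(2+e) \in U(R)$, and therefore $-2 = e + (-2 - e)$ exhibits $-2$ as the sum of the idempotent $e$ and the unit $-2-e$; that is, $-2$ is clean.

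The only genuine obstacle is spotting that the exchange property should be applied to $3$ rather than to $-2$ (or to $2$): applying it to $3$ produces an idempotent $e$ that, roughly speaking, splits $R$ into a summand on which $3$ is already a unit and a complementary summand on which $2$ is already a unit, which is exactly what is needed to invert $3e + 2(1-e) = 2 + e$. After that the verification is a routine corner-ring computation, valid in the noncommutative case since only the central integers $2,3$ ever have to be moved past elements of $R$.
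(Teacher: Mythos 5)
Your proof is correct, and it is worth noting that the paper itself offers no argument for this lemma at all: it simply defers to \cite[Lemma 4.2]{CS2}. Your write-up therefore supplies a self-contained, elementary proof where the paper has only a citation. The argument checks out in every detail: applying the exchange property (in exactly the form the paper states it) to the central element $3$ gives an idempotent $e=3u\in 3R$ with $1-e=2v\in(-2)R=2R$; squaring these relations yields $e=3eue$ and $1-e=2(1-e)v(1-e)$, so $eue$ and $(1-e)v(1-e)$ are two-sided inverses of $3e$ and $2(1-e)$ in the corners $eRe$ and $(1-e)R(1-e)$; since $e(1-e)=(1-e)e=0$, the Peirce-type sum $eue+(1-e)v(1-e)$ is then a two-sided inverse of $3e+2(1-e)=2+e$, so $2+e\in U(R)$ and $-2=e+\bigl(-(2+e)\bigr)$ is clean. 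The only nonroutine idea --- feeding $3=1-(-2)$ into the exchange condition so that $3$ becomes invertible on one corner and $2$ on the complementary corner --- is exactly the right one, and centrality of $2,3$ is all that is used, so the noncommutative setting causes no trouble. Compared with the paper's approach, yours costs a short computation but buys independence from the external reference \cite{CS2}.
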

\begin{proof} See~\cite[Lemma 4.2]{CS2}.\end{proof}

\begin{lem} Let $R$ be an exchange Hirano ring. Then $30\in R$ is nilpotent.\end{lem}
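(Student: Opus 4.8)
The plan is to test the Hirano hypothesis on the single unit produced by a clean decomposition of $-2$. Since $R$ is exchange, Lemma 5.1 gives $-2 = e + u$ with $e^2 = e$ and $u \in U(R)$; thus $u = -2 - e$, and because $2\cdot 1_R$ is central, a direct computation using $e^2 = e$ yields $u^2 = 4 + 5e$. As $R$ is a Hirano ring, one of $1 + u^2 = 5(1+e)$ or $1 - u^2 = -(3+5e)$ is nilpotent, and I would split into these two cases.

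First, suppose $5(1+e) \in N(R)$. From $e^2 = e$ one gets $(1+e)^n = 1 + (2^n-1)e$ for all $n$, so there is $n$ with $5^n\big(1+(2^n-1)e\big) = 0$. Multiplying this identity on the right by $1-e$ and then by $e$ (keeping in mind that $e$ need not be central) gives $5^n(1-e) = 0$ and $10^n e = 0$; since also $10^n(1-e) = 2^n\cdot 5^n(1-e) = 0$, we conclude $10^n = 10^n e + 10^n(1-e) = 0$, hence $30 = 3\cdot 10 \in N(R)$.

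Second, suppose $3 + 5e \in N(R)$. The same expansion gives $(3+5e)^n = 3^n + (8^n - 3^n)e$, so for suitable $n$ we obtain, upon multiplying by $1-e$ and by $e$, the relations $3^n(1-e) = 0$ and $8^n e = 0$. Then $6^{3n} = 2^{3n}3^{3n}$ annihilates $e$ (because $8^n e = 2^{3n}e = 0$) and annihilates $1-e$ (because $3^n(1-e) = 0$), so $6^{3n} = 0$, and again $30 = 5\cdot 6 \in N(R)$.

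I do not expect a serious obstacle here: the only point requiring a little care is that $e$ is not assumed central, so the nilpotency relations must be multiplied by $e$ and $1-e$ consistently on one side; the rest is routine binomial bookkeeping exploiting that powers of $1+e$ and $3+5e$ are explicit ${\Bbb Z}$-combinations of $1$ and $e$. The real content is the observation that Lemma 5.1 hands us exactly one unit on which the Hirano condition can be exercised, and that this unit's square is a ${\Bbb Z}$-linear combination of $1$ and $e$.
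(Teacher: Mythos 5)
Your proof is correct and follows essentially the same route as the paper: both invoke Lemma 5.1 to write $-2=e+u$ and then apply the Hirano condition to this particular unit $u=-2-e$. The only difference is cosmetic: you exploit the explicit identity $u^2=4+5e$ and split with the idempotent $e$ to get $10$ or $6$ nilpotent, whereas the paper manipulates the unknown nilpotent $w=1\pm u^2$ to conclude that $24$ or $50$ is nilpotent; both computations are valid and yield $30\in N(R)$.
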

\begin{proof} In view of Lemma 5.1, $-2\in R$ is clean. Then $-2=e+u$ for some idempotent $e$ and unit $u$. As $R$ is a Hirano ring, $1\pm u^2=w$ for some $w\in N(R)$.

Case I: $1-u^2=w$. Sine $-1-e=u+1$, then $1+3e=u^2+1+2u$, this implies that $3e-2u=1-w=1+(-w)=1+v$ for some $v\in N(R)$. Hence $3e-2(-2-e)=1+v$, and so $5e=-3+v$. We see that $5(-2-u)=-3+v$, i.e., $5u=-7+v$, and then $25u^2=49+v^2-14v=49+v_1$ for some $v_1\in N(R)$. Thus $24\in N(R)$, and so $6\in N(R)$ which implies $30\in N(R)$.

Case II: $1+u^2=w$. As $-2=e+u$, then $1+3e=u^2+2u+1=w+2u$, so $3e-2u=w-1$. Thus, $2u-3e=1+(-w)=1+w^{\prime}$, $2(-2-e)-3e=1+w^{\prime}$, i.e., $-5-5e=w^{\prime}$. This implies that $-5e=w^{\prime}+5$, and then $-5(-2-u)=w^{\prime}+5$. Hence, $5u=w{\prime}-5$, so $25u^2=25+w^{''}$, which implies that $25(w-1)=25+w{''}$. We infer that $50\in N(R)$, whence $2\times 5\times 5\in N[R]$. Accordingly, $2\times 5\in N(R)$, and therefore $30\in N(R)$.\end{proof}

\begin{lem} Let $R$ be an exchange Hirano ring. Then $J(R)$ is nil.\end{lem}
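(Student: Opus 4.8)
The plan is to exploit the structure already available for exchange Hirano rings, namely Lemma 5.2, which tells us that $30$ is nilpotent in $R$. Since $30 = 2\cdot 3\cdot 5 \in N(R)$, the ring splits as a product (after passing to the appropriate $p$-localizations or, more elementarily, using the Chinese Remainder idempotents coming from a decomposition of $1$ in terms of powers of $2,3,5$): there is an idempotent decomposition $1 = e_2 + e_3 + e_5$ with $e_p R$ having $p$ a unit and the other two primes among the nilpotents. It therefore suffices to treat each factor, i.e., to show $J(R)$ is nil under the additional hypothesis that some single prime $p\in\{2,3,5\}$ is a unit in $R$ and the product of the remaining two is nilpotent; the general case follows since a finite product of rings with nil Jacobson radical again has nil Jacobson radical and $J(\prod R_i)=\prod J(R_i)$.

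First I would reduce to showing that $R/J(R)$ has no nonzero nilpotents would be too strong; instead the cleaner route is: let $x\in J(R)$ and aim to show $x$ is nilpotent directly. The key point is that $1+y$ is a unit for every $y\in J(R)$, so for any $x\in J(R)$ the elements $u_+ := 1+x$ and more usefully combinations like $1 + 2x$, $1+3x$ are units; applying the Hirano condition, $1 \pm u^2 \in N(R)$ for each such unit $u$. Writing $u = 1 + cx$ with $c$ an integer, $u^2 = 1 + 2cx + c^2 x^2$, so $1 - u^2 = -(2cx + c^2 x^2) = -cx(2 + cx) \in J(R)$, which is automatically in $N(R)$ only once we know $J(R)$ is nil — so this particular unit gives no new information. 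The productive case is $1 + u^2 \in N(R)$, which forces $2 + 2cx + c^2x^2 \in N(R)$; since $x\in J(R)$ this says $2 \in N(R)$, i.e.\ $2$ is nilpotent. Running the same argument on the factor where $3$ or $5$ is a unit, the Hirano dichotomy for the unit $1 + x$ (with $x\in J$) must land on $1 - u^2$ being nilpotent in those factors since $2$ is a unit there, so $1-u^2 = -x(2+x)$ with $2+x$ a unit, hence $x \in N(R)$: that is exactly what we want.

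So the argument I would write is: fix $x\in J(R)$ and the unit $u = 1+x$. By the Hirano property either $1+u^2\in N(R)$ or $1-u^2\in N(R)$. In the first case $1+u^2 = 2 + 2x + x^2 \in N(R)$, and since $2x+x^2\in J(R)\subseteq$ (quasiregular elements), one checks $2 = (1+u^2) - (2x+x^2)$ is a unit plus a nilpotent... — wait, that is not quite clean either, so more carefully: $2 + 2x + x^2 \in N(R)$ means $(2+2x+x^2)^n = 0$; expanding and using $x\in J(R)$ one gets $2^n \in J(R)$, hence $2^n$ is quasiregular, and since it is also a central-ish integer multiple of $1$... I would instead invoke Lemma 5.2 to note $2\cdot 3\cdot 5$ is already nilpotent, so the only genuine obstruction is handling the prime that is a unit in a given factor. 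The real content, and the step I expect to be the main obstacle, is making the reduction to $p$-primary factors rigorous and then, in the factor where (say) $2$ is invertible, squeezing nilpotence of an arbitrary $x\in J(R)$ out of the single Hirano equation for $u=1+x$: there the needed identity is $1-u^2 = -x(2+x)$ with $2+x\in U(R)$, giving $x\in N(R)$ immediately, while the alternative $1+u^2\in N(R)$ is excluded because it would force $2\in N(R)$, contradicting $2\in U(R)$. Thus in every factor $J(R)$ is nil, and therefore $J(R)$ is nil in $R$.
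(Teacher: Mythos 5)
Your overall route is the paper's: invoke Lemma 5.2 to get $30\in N(R)$, decompose $R\cong R/2^nR\times R/3^nR\times R/5^nR$ via the resulting central idempotents, and then, for $x$ in the radical of a factor, apply the Hirano condition to the unit $u=1+x$. In the two factors where $2$ is invertible your argument is in substance the paper's, though two repairs are needed: the decomposition is stated backwards (in the factor $R/p^nR$ the prime $p$ is nilpotent and the other two primes are units, not the reverse), and the step ``$1+u^2\in N(R)$ together with $x\in J(R)$ forces $2\in N(R)$'' is not valid as written, since a nilpotent plus a radical element need not be nilpotent; the paper instead observes that $x(2+x)=(1+u^2)-2$ would then be a unit (a central unit plus a commuting nilpotent), so $x$ would be a unit lying in $J(R)$, which is absurd; alternatively pass to $R/J(R)$ and obtain $2^k\in J(R)\cap U(R)$.

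The genuine gap is the factor $R_1=R/2^nR$, where $2$ is nilpotent rather than invertible: your proof never shows $J(R_1)$ is nil. There $2+x\in J(R_1)$, so the identity $1-u^2=-x(2+x)$ yields nothing by your method (as you yourself note), and the branch $1+u^2\in N(R_1)$ only gives ``$2$ nilpotent'', which is vacuous in this factor. The paper closes this case by proving $R_1$ is a UU ring: since $2$ is a central nilpotent, $(u-1)^2=(1+u^2)-2u$ and $(1-u)^2=2(1-u)-(1-u^2)$ are, in the respective branches, sums of commuting nilpotents, so $1-u\in N(R_1)$ for every unit $u$, and then $x=(1+x)-1\in N(R_1)$ for every $x\in J(R_1)$ (the paper quotes Danchev--Lam at this point). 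Your own computation can also be salvaged directly in this factor: with $u=1+x$ the two branches give $x^2+2x\in N(R_1)$ or $x^2+2(1+x)\in N(R_1)$, and since $2x$ and $2(1+x)$ are nilpotent and commute with these elements, $x^2\in N(R_1)$ in either case, whence $x\in N(R_1)$. Without some such argument for the $2$-primary factor the proof is incomplete.
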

\begin{proof} In view of Lemma 5.2, $30\in N(R)$. Write $30^n=0 (n\in {\Bbb N})$. Then we can write $R=R_1\times R_2\times R_3$, where $R_1\cong R/2^nR, R_2\cong R/3^nR$ and $R_3\cong R/5^nR$. As $R$ is a Hirano ring, so is $R_1$ by Proposition 4.1, Then for any $u\in U(R_1), 1\pm u^2\in N(R_1)$, also $2\in N(R_1)$. If $1+u^2\in N(R_1)$ we can write $(u-1)^2 =1+u^2-2u\in N(R_1)$ and so $1-u\in N(R_1)$, which implies $R_1$ is a UU ring. As in~\cite[Theorem 2.4]{Da}, $J(R_1)$ is nil. If $1-u^2\in N(R_1)$, then $-(1-u)^2=-u^2-1+2u=1-u^2-2(1-u)\in N(R_1)$, then $1-u\in N(R_1)$ and so $J(R_1)$ is nil. Let $x\in J(R_2)$, as $R_2$ is a Hirano ring, $\pm (1+x)^2=1+w$ for some $w\in N(R_2)$, hence $x(x+2)$ or $x(x+2)+2$ is nilpotent. Case I. $w:=x(x+2)\in N(R)$.
As $3\in N(R_2)$, we see that $2\in U(R_2)$, and so $x+2=2^{-1}(1+2x)\in U(R_2)$. We infer that $x=(x+2)^{-1}w\in N(R_2)$.
Case II. $w:=x(x+2)+2\in N(R)$. Then $x(x+2)=w-2\in U(R_2)$, and so $x\in U(R_2)$, a contradiction. This imples that $J(R_2)$ is nil.
For $R_3$, as $5\in N(R_3)$, we deduce that $2\in U(R_3)$. Thus, by the similar route for $R_2$, we see that $J(R_3)$ is nil. Therefore $J(R)$ is nil, as asserted.
\end{proof}

We have accumulated all the information necessary to prove the following.

\begin{thm} A ring $R$ is Yaqub nil-clean if and only if
\end{thm}
\begin{enumerate}
\item [(1)] {\it $R$ is an exchange ring;}
\vspace{-.5mm}
\item [(2)] {\it $R$ is a Hirano ring.}
\end{enumerate}
\begin{proof} $\Longrightarrow$ By Corollary 3.7, $R$ is periodic, and so it is an exchange ring. Let $u\in U(R)$. Then $u\pm u^3\in N(R)$; hence, $1\pm u^2\in N(R)$. Therefore $R$ is a Hirano ring.

$\Longleftarrow$ Let $0 \neq x\in N(R)$, we can assume that $x^2=0$. As $R$ is an exchange ring with $J(R)=0$, by~\cite[Lemma 2.7]{Z}, we can find some idempotent $e\in R$ and some ring $T$, such that $eRe\cong M_2(T)$, but as we see in Example 4.5, $M_2(T)$ is not a Hirano ring, i.e, $eRe$ is not a Hirano ring. This shows that $R$ is not a Hirano by Proposition 4.1, a contradiction. So we deduce that $N(R)=0$, and then $R$ is a reduced ring. This implies that
$R$ is abelian. Since $R$ is an exchange ring, it follows by~\cite[Proposition 1.8]{N} that $R$ is clean.

In light of Lemma 5.2, $30\in N(R)$. Write $2^n\times 3^n\times 5^n=0 (n\in {\Bbb N}$. Then $R\cong R_1, R_2, R_3$ or products of these rings, where $R_1=R/2^nR,R_2=R/3^nR$ and $R_3=R/5^nR$.

Case 1. $2\in N(R_1)$. Let $a\in R_1$. Then we have a central idempotent $e\in R$ and a unit $u\in R$ such that $a=e+u$.
As $1\pm u^2\in N(R_1)$, we see that $u\in 1+N(R_1)$. Hence, $a^2=e+2eu+u^2$, and so $a-a^2\in N(R_1)$. This implies that $a-a^3=(a-a^2)+a(a-a^2)\in N(R_1)$, and so $R_1$ is Yaqub nil-clean.

Case 2. $3\in N(R_2)$. Let $a\in R_2$. Then we have a central idempotent $e\in R$ and a unit $u\in R$ such that $a=e+u$.
Hence, $a^3=(e+u)^3=e+3eu+3eu^2+u^3$. If $1+u^2\in N(R_2)$, then $u+u^3\in N(R_2)$, and so $a+a^3\in N(R_2)$.
If $-1+u^2\in N(R_2)$, then $u-u^3\in N(R_2)$. Therefore $a-a^3\in N(R_3)$. In any case, $a\pm a^3\in N(R_2)$. This means that $R_2$ is Yaqub nil-clean.

Case 3. $5\in N(R_3)$. Let $a\in R_3$. Then we have a central idempotent $e\in R_3$ and a unit $u\in R_3$ such that $a=e+u$. Then $1\pm u^2\in N(R_3)$, and so $u-u^5\in N(R_3)$. Further, $a^5=(e+u)^5=e^5+5eu+10u^2+10eu^3++5eu^4+u^5$, whence, $a-a^5\in N(R_3)$. Choose $u$ is $(1,2)$ in ${\Bbb Z}_3\times {\Bbb Z}_5$ or
${\Bbb Z}_5\times {\Bbb Z}_5$. Then $1\pm u^2$ is not nilpotent. This implies that $R_3$ has no homomorphic images ${\Bbb Z}_3\times {\Bbb Z}_5$ and ${\Bbb Z}_5\times {\Bbb Z}_5$. According to Corollary 3.2, $R_3$ is Yaqub nil-clean.

Case 4. $R\cong R_1\times R_2, R_1\times R_3$. One easily checks that $R$ is Yaqub nil-clean.

Case 5. $R\cong R_2\times R_3, R_1\times R_2\times R_3$. But $R_2\times R_3$ is not a Hirano ring, as $(1,2)\in U(R_2\times R_3)$ and $(1,1)\pm (1,2)^2\not\in N(R_2\times R_3)$. Thus, this case can not appear.

Therefore $R$ is Yaqub nil-clean.\end{proof}

\begin{cor} A ring $R$ is Yaqub nil-clean if and only if
\end{cor}
\begin{enumerate}
\item [(1)] {\it $R$ is periodic;}
\vspace{-.5mm}
\item [(2)] {\it $R$ is a Hirano ring.}
\end{enumerate}
\begin{proof} $\Longrightarrow$ $(1)$ follows from Corollary 3.7 and $(2)$ is obtained by Theorem 5.4.\\
$\Longleftarrow$  As every periodic ring is an exchange ring then we get the result by Theorem 5.4.\end{proof}

\vskip10mm

\end{document}